\theoremstyle{plain}
\newtheorem{theorem}{Theorem}[section]
\newtheorem{lemma}[theorem]{Lemma}
\newtheorem{sublemma}[theorem]{Sublemma}
\newtheorem{proposition}[theorem]{Proposition}
\theoremstyle{definition}
\newtheorem{definition}[theorem]{Definition}
\theoremstyle{remark}
\newtheorem{remark}[theorem]{Remark}
\newcommand{\Cc}{\mathbb{C}}
\newcommand{\Ee}{\mathbb{E}}
\newcommand{\Nn}{\mathbb{N}}
\newcommand{\Pp}{\mathbb{P}}
\newcommand{\Rr}{\mathbb{R}}
\newcommand{\Uu}{\mathbb{U}}
\newcommand{\Zz}{\mathbb{Z}}
\newcommand{\Un}{1\!\!1}  % indicator function
\newcommand{\Eee}{\mathcal{E}}
\newcommand{\Fe}{\mathcal{F}}
\newcommand{\Me}{\mathcal{M}}
\newcommand{\Es}{\mathscr{E}}
\newcommand{\Gs}{\mathscr{G}}
\newcommand{\Ns}{\mathscr{N}}
\newcommand{\Ys}{\mathscr{Y}}
\newcommand{\Arg}{\operatorname{Arg}}
\newcommand{\trnp}{\operatorname{tr}}
\newcommand{\card}{\operatorname{card}}
\renewcommand{\Re}{\operatorname{Re}}
\renewcommand{\Im}{\operatorname{Im}}
\renewcommand{\geq}{\geqslant}
\renewcommand{\leq}{\leqslant}
\let\oldforall\forall
\def\forall{\oldforall\,}
\newcommand{\ensemble}[1]{ \left\lbrace #1 \right\rbrace } % Ensemble
\newcommand{\prth}[1]{\left( #1 \right) } % Parenthses
\newcommand{\Esp}[1]{ \Ee \prth{ #1 } } % Esperance
\newcommand{\Espr}[2]{ \Ee_{#1} \! \prth{ #2 } } % Esperance avec indice pour la proba
\newcommand{\Prob}[1]{ \Pp \prth{ #1 } } % Proba
\newcommand{\Proba}[2]{ \Pp_{#1} \prth{ #2 } } % Proba avec indice pour la proba
\newcommand{\crochet}[1]{\left[ #1 \right] } % Crochets
\newcommand{\intcrochet}[1]{\llbracket #1 \rrbracket} % Crochets entiers
\newcommand{\abs}[1]{\left| #1 \right|} % Valeur absolue
\newcommand{\Bigabs}[1]{\Big| #1  \Big|} % Valeur absolue "Big"
\newcommand{\biggabs}[1]{\bigg| #1  \bigg|} % Valeur absolue "Big"
\newcommand{\Unens}[1]{ \Un_{ \ensemble{#1} } } % indicatrice de l'ensemble...
\newcommand{\tr}[1]{\trnp{ \prth{ #1 } } } % Trace avec parenthses
\newcommand{\intpi}[1]{\int_0^{2 \pi} #1 \frac{d \theta}{2 \pi} }
\newcommand{\intmean}[3]{\int_0^{#2} #3 \frac{d #1}{#2} }
\begin{document}

% == Title % \title[short text for running head]{full title}

\title[Sums of characteristic polynomials of unitary matrices]{On the number of zeros of linear combinations of independent characteristic polynomials of random unitary matrices}

%% == Author one informations % \author[short version for running head]{name for top of paper}

\author[Y. Barhoumi]{Yacine Barhoumi}
\address{Institut f\"ur Mathematik, Universit\"at Z\"urich, Winterthurerstrasse 190, 8057-Z\"urich, Switzerland}
\email{yacine.barhoumi@math.uzh.ch}

\author[C.P. Hughes]{Christopher Hughes}
\address{Department of Mathematics, University of York, Heslington, York, UK YO10 5DD, United Kingdom}
\email{christopher.hughes@york.ac.uk}

\author[J. Najnudel]{Joseph Najnudel}
\address{Institut de Math\'ematiques de Toulouse, Universit\'e Paul Sabatier, 118 route de Narbonne, F-31062 Toulouse Cedex 9, France }
\email{joseph.najnudel@math.univ-toulouse.fr}

\author[A. Nikeghbali]{Ashkan Nikeghbali}
\address{Institut f\"ur Mathematik, Universit\"at Z\"urich, Winterthurerstrasse 190, 8057-Z\"urich, Switzerland}
\email{ashkan.nikeghbali@math.uzh.ch}

% == Required : \subjclass % \subjclass[2000]{Primary }

\date{\today}

% == Abstract

\begin{abstract}

We show that almost all the zeros of any finite linear combination  of independent characteristic polynomials of random unitary matrices lie on the unit circle. This result is the random matrix  analogue of an earlier result by Bombieri and Hejhal on the distribution of zeros of linear combinations of $L$-functions, thus providing further evidence for the conjectured links between the value distribution of the characteristic polynomial of random unitary matrices and the value distribution of $L$-functions on the critical line.

\end{abstract}

\maketitle

% ==
\section{Introduction}
Over the past two decades, there have been many new results at the interface of random matrix theory and analytic number theory that can be considered as evidence for the zeros of the Riemann zeta function being statistically distributed as eigenvalues of large random matrices (GUE matrices or Haar distributed unitary matrices); the interested reader can refer to \cite{MS}, \cite{KaS2} and \cite{RS} for a detailed account with many references, and to \cite{KaS} for the function field framework. Since the seminal papers by Keating and Snaith \cite{KS, KS2}, it is believed that the characteristic polynomial of random unitary  matrices on the unit circle models very accurately the value distribution of the Riemann zeta function (or more generally $L$-functions) on the critical line. This analogy was used by Keating and Snaith to produce the moments conjecture and since then the characteristic polynomial has been the topic of many research papers, and the moments of the characteristic polynomial have now been derived with many different methods, e.g.\ representation theoretic methods (see \cite{BumGa, POD}), super-symmetry method (see \cite{MS}), analytic methods (Toeplitz determinant methods as explained in the lecture by E. Basor in \cite{MS}, orthogonal polynomials on the unit circle method  \cite{KiNe}) or probabilistic methods (\cite{BHNY}), each method bringing a new insight to the problem. Many more fine properties of the characteristic polynomial have been established (e.g. large deviations principle in \cite{HKO}, local limit theorems \cite{KN}, the analogue of the moments conjecture for finite field zeta functions \cite{JKN}, etc.). Moreover, thanks to this analogy, one has been able to perform calculations in the random matrix world (whose analogue in the number theory world seems currently out of reach) to produce conjectures for the analogue arithmetic objects (see \cite{S} for a recent account).

There are nonetheless certain results that can be proved in both sides, such as Selberg's central limit theorem for the Riemann zeta function and the Keating-Snaith central limit theorem for the characteristic polynomial of random unitary matrices (see \cite{KS}). In fact Selberg's central limit theorem can be proved more generally for a wide class of $L$-functions (see \cite{Se} and \cite{BH}). Roughly speaking, an $L$-function must be defined by a Dirichlet series for $\Re(s)>1$, have an Euler product (with some growth condition on the coefficients of this product), an analytic continuation (except for finitely many poles all located on the line $\Re (s)=1$), and must satisfy a functional equation.  Such $L$-functions are expected to satisfy the general Riemann hypothesis (GRH), which says that all the non-trivial zeros are located on the critical line, the line $\Re(s)=1/2$.

Now if one considers a finite number of such $L$-functions, satisfying the same functional equation, then one can wonder if the zeros of a linear combination of these $L$-functions are still on the critical line. The answer is in general that GRH does not hold anymore for such a linear combination even though it still has a functional equation (this can be thought of coming from the fact that such a linear combination does not have an Euler product anymore). But Bombieri and Hejhal proved in \cite{BH} that nonetheless $100\%$ of the zeros of such linear combinations are still on the critical line (under an extra assumption of ``near orthogonality'' which ensures that the $\log$ of the $L$-functions are statistically asymptotically independent). In this paper we will show that a similar result holds for linear combinations of independent characteristic polynomials of random unitary matrices. The result on the random matrix side is technical and difficult and besides being an extra piece of evidence that the characteristic polynomial is a good model for the value distribution of $L$-functions, the result is also remarkable when viewed in the general setting of random polynomials as we shall explain it. The main goal of this article is to show that on average, any linear combination of characteristic polynomials of independent random unitary matrices has a proportion of zeros on the unit circle which tends to $1$ when the dimension goes to infinity.

More precisely, if $U$ is a unitary matrix of order $N \geq 1$, let $\Phi_U$ be the characteristic polynomial of $U$, in the following sense: for $z\in\Cc$,
$$\Phi_U(z) =  \det\prth{I_N - z U}.$$ From the fact that $U$ is unitary, we get the functional equation: $$\Phi_U(z) = (-z)^N \det (U) \overline{\Phi_U}(1/z).$$
For $z$ on the unit circle, this equation implies that $$\Phi_U(z) = R(z) \sqrt{(-z)^N \det(U)},$$ where $R(z)$ is real-valued (with any convention taken for the square root).
The fact that $\Phi_U$ has many zeros (in fact, all of them) on the unit circle can be related to the fact that the condition needed for $\Phi_U$ to vanish is in only unidimensional
(i.e. $R(z) = 0$ for a real-valued function $R$). Now, let $(U_j)_{1 \leq j \leq n}$ be unitary matrices of order $N$, and let $(b_j)_{1 \leq j \leq n}$ be real numbers: we wish to study
the number  of  zeros on the unit circle of the linear combination
 $$F_N = \sum_{j = 1}^n b_j \Phi_{U_j}. $$
 If we want that $F$ has most of its zeros on the unit circle, it is reasonable to expect that we need a ``unidimensional condition'' for the equation $F(z) = 0$
if $|z| = 1$, i.e. a functional equation similar to the equation satisfied by $U$. This equation obviously exists if all the characteristic polynomials $\Phi_{U_j}$ satisfy the {\it same} functional
equation, i.e. the matrices $U_j$ have the same determinant. By symmetry of the unitary group, it is natural to assume that the unitary matrices have determinant $1$. More precisely, the
main result of the article is the following:
\begin{theorem} \label{main}
 Let $(b_j)_{1 \leq j \leq n}$ be a family of (deterministic) real numbers, different from zero. For $N \geq 1$, let
 $$F_N := \sum_{j=1}^n b_j \Phi_{U_{N,j}},$$
where $(U_{N,j})_{1 \leq j \leq n}$ is a family of independent matrices following the Haar measure on the special unitary group $SU(N)$.
Then, the expected proportion of zeros of $F_N$ on the unit circle tends to $1$ when $N$ goes to infinity, i.e.
$$\Esp{ | \{z \in \Uu, F_N(z) = 0\}|} = N - o(N),$$
where $| \{z \in \Uu, F_N(z) = 0\}|$ is the number of $z$ on the unit circle which satisfy $ F_N(z) = 0$.
\end{theorem}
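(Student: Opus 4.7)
Since $\det U_{N,j} = 1$, each $\Phi_{U_{N,j}}$ satisfies the common functional equation $\Phi(z) = (-z)^N \overline{\Phi}(1/z)$, hence so does $F_N$. On $\Uu$ we may therefore write
$$F_N(e^{i\theta}) = e^{iN\theta/2}\, c_N\, S_N(\theta),$$
with $|c_N|=1$ (depending only on the parity of $N$) and $S_N\colon [0,2\pi) \to \Rr$ real-valued; setting $R_{N,j}(\theta) := e^{-iN\theta/2}\,\overline{c_N}\,\Phi_{U_{N,j}}(e^{i\theta})$ gives $S_N = \sum_j b_j R_{N,j}$, with the $R_{N,j}$ real and independent across $j$. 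Because $F_N$ is a polynomial of degree $N$ whose off-circle zeros pair as $\{z, 1/\overline z\}$, the number of zeros of $F_N$ on $\Uu$ equals $N$ minus twice the number of zeros in the open disk. The theorem is therefore equivalent to showing that the expected number of sign changes of $S_N$ on $[0,2\pi)$ is $N - o(N)$.

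\textbf{Counting sign changes on a fine partition.}
Introduce the points $\theta_k = 2\pi k/N$ for $k = 0, \dots, N-1$. The skeleton phase $e^{iN\theta_k/2} = (-1)^k$ is real and flips sign from $\theta_k$ to $\theta_{k+1}$. The key probabilistic statement is that, uniformly in $k$,
$$\Prob{S_N(\theta_k)\,S_N(\theta_{k+1}) \geq 0} = o(1),$$
which yields a sign change of $S_N$ (hence a zero of $F_N$ on $\Uu$) in all but $o(N)$ of the arcs $[\theta_k, \theta_{k+1})$. One establishes this by analyzing the joint law of the pairs $(R_{N,j}(\theta_k), R_{N,j}(\theta_{k+1}))_{j}$, which are independent across $j$; Keating--Snaith-type joint central-limit estimates for $\log|\Phi_{U_{N,j}}|$, uniform upper- and lower-tail bounds ruling out anomalous cancellation, and short-scale variation estimates for each $R_{N,j}$ combine to force the desired sign alternation. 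Summing the exceptional events over all $k$ gives the required lower bound $N - o(N)$ on the expected number of sign changes.

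\textbf{Main obstacle.}
The quantitative probabilistic step above is the technical heart of the proof and differs sharply from Bombieri--Hejhal's argument in the $L$-function setting, where the Euler product yields natural leading terms dominating $\log L_j$ at arithmetic Gram points. Here one must work at the scale $2\pi/N$, which is exactly the mean eigenvalue spacing of each $U_{N,j}$. At this scale each $R_{N,j}$ fluctuates wildly, and the typical moduli $|\Phi_{U_{N,j}}|$ across $j$ are all of the same order $N^{O(1/\sqrt{\log N})}$, so a sign determination cannot follow from crude magnitude comparisons. Obtaining the required joint moment and tail estimates for $(\Phi_{U_{N,j}})_j$ at pairs of nearby points --- well beyond what follows from the pointwise Keating--Snaith central limit theorem --- is the main difficulty, and it is precisely where the specific structure of characteristic polynomials under Haar measure on $SU(N)$ (independence and orthogonality of elementary symmetric functions of the eigenvalues, explicit joint moment identities) must be exploited.
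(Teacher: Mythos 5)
Your reduction to counting sign changes of a real-valued function is the same first step as the paper's, but the route you then propose rests on a claim that is false. You assert that, uniformly in $k$, $\Pp\bigl(S_N(\theta_k)\,S_N(\theta_{k+1})\geq 0\bigr)=o(1)$ for the mesh $\theta_k=2\pi k/N$, i.e.\ that with probability $1-o(1)$ every arc of length $2\pi/N$ (one mean eigenvalue spacing) produces a sign change. This already fails in the trivial case $n=1$: for a single $U\in SU(N)$ the function $i^Ne^{iN\theta/2}Z_U(\theta)$ is real with zeros exactly at the eigenangles, and its sign at $\theta_k$ and $\theta_{k+1}$ agrees precisely when the arc contains an even number of eigenangles. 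The CUE gap probability for an interval of one mean spacing is a positive constant independent of $N$ (as is the probability of exactly two eigenangles there), so the probability of no sign change on a given arc is bounded below by a positive constant, not $o(1)$. The intuition that the ``skeleton phase'' $e^{iN\theta/2}$ flips sign and thereby forces a sign change of $S_N$ ignores that $F_N(e^{i\theta})$ itself varies on exactly the scale $1/N$. The correct count $N-o(N)$ arises not arc-by-arc at the mean spacing but because arcs with no zero are compensated by arcs with several; any proof must therefore work on windows containing many zeros.

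This is in fact what the paper does: it partitions $[\theta_0,\theta_0+2\pi)$ into $K\sim N/(\log N)^{3/64}$ windows of length $\Delta=2\pi M/N$ with $M\to\infty$ slowly, shows via an anti-concentration bound for $\log|Z_X(\theta)|$ (Lemma \ref{boundconcentration}) and Lemma \ref{L8BH} that on most windows one term (the ``carrier wave'') dominates by a factor $e^{\delta\sqrt{\log N}/\sqrt 2}$, controls the oscillation of $\log Z$ across the window (Lemmas \ref{L7BH} and \ref{speedgoodoscillation}) so that the same index stays dominant, counts the carrier wave's zeros in the window through $\Im\log Z$ (Proposition \ref{propositionImLogCardZeros}), and corrects for narrow gaps via a two-point correlation estimate. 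Your proposal defers all of this to a single sentence (``Keating--Snaith-type joint central-limit estimates \dots combine to force the desired sign alternation''), and you correctly identify in your final paragraph that this step is the entire difficulty; but a proof must supply it, and as written the step it would have to supply is not merely missing but, in the form you stated it, untrue. The proposal therefore does not constitute a proof.
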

The whole paper is devoted to the proof of this result. Before explaining the strategy of the proof, we make a few remarks.
\begin{remark}
Theorem \ref{main} can be stated as
$$\lim_{n\to\infty} \Esp{\dfrac{1}{N} | \{z \in \Uu, F_N(z) = 0\}|} = 1.$$ Since the random variable $\dfrac{1}{N} | \{z \in \Uu, F_N(z) = 0\}|$ is bounded by $1$, in fact the convergence holds in all $L^p$ spaces for $p\geq1$. It also holds in probability since convergence in $L^1$ implies convergence in probability.
\end{remark}
\begin{remark}
The fact that we impose our matrices to have the same determinant is similar to the condition in \cite{BH} of the $L$-functions to have the same functional equation. Moreover, in our framework, the analogue of the Riemann hypothesis is automatically satisfied since all the zeros of each characteristic polynomial are on the unit circle.
\end{remark}
\begin{remark}
The fact that the proportion of zeros on the unit circle tends to $1$ is a remarkable fact as a result about random polynomials. Indeed it is well known that the characteristic polynomial of a unitary matrix is self-inversive (that is $a_{N-k}=\exp(i \theta) \bar{a}_k$ for some $\theta\in\Rr$, if $(a_k)_{0\leq k\leq N}$ are the coefficients of the polynomial). As explained in \cite{BBL}, self-inversive random polynomials are of interest in the context of semiclassical approximations in quantum mechanics and determining the proportion of zeros on the unit circle is there an important problem. Bogomolny, Bohigas and Leboeuf showed that if the first half of the coefficients (the second half being then fixed by the self-inverse symmetry) are chosen as independent complex Gaussian random variables, then asymptotically a fraction of $\frac{1}{\sqrt{3}}$ of the zeros are exactly on the unit circle. Hence we can say that our result is not typical of what is expected for classical random polynomials built from independent Gaussian random variables. In our framework, we do not even know the distribution of the coefficients and we also know that they are in fact not independent. Consequently the classical methods which use the independence of the coefficients (or the fact that they are Gaussian if one wants to add some dependence) would not work here.  Using general results on random polynomials whose coefficients are not independent and which do not have the same distribution as stated in \cite{HN}, one can deduce that the zeros cluster uniformly around the unit circle. But showing that they are almost all precisely on the unit circle is a much more refined statement.
\end{remark}
We now say a few words about our strategy of proof of Theorem \ref{main}. In fact we use the same general method as in \cite{BH}, called the "carrier waves" method, but the ingredients of our proof are different, in the sense that they are probabilistic: for instance we use the coupling method, concentration inequalities and the recent probabilistic representations of the characteristic polynomial obtained in \cite{BHNY}. More precisely, for $U \in U(N)$ and $t \in \Rr$, we denote by $Z_U(t)$ the characteristic polynomial of $U$ taken at $e^{-it}$, i.e. $Z_U(t) = \Phi_U(e^{-it})$.  Then we make a simple transformation of the linear combination $F_N$ in order that it is real valued when restricted as a function on the unit circle:

\begin{equation}\label{aserty}
i^N e^{iN \theta /2} F_N(e^{-i \theta}) = i^N e^{iN \theta /2} \sum_{j=1}^n b_j \Phi_{U_{j}} (e^{-i \theta}) = \sum_{j=1}^n b_j i^N e^{iN \theta/2} Z_{U_{j}} (\theta).
\end{equation}

Using the fact that $U_{j} \in SU(N)$, one checks that $i^N e^{iN \theta/2} Z_{U_{j}} (\theta)$ is real, and that then the number of zeros of $F_N$ on the unit circle is
bounded from below by the number of sign changes, when $\theta$ increases from $\theta_0$ to $\theta_0 + 2 \pi$ (with $\theta_0$ to be chosen carefully), of the real quantity given by the right-hand side of the equation above. The notion of carrier waves is explained in detail in \cite{BH}, p. 824--827 and we do not explain it again but we would rather give a general outline.
The main idea is that informally, with "high" probability and for "most" of the values of $\theta$,
one of the characteristic polynomials $Z_{U_j}$ dominates all the others (it is the "carrier wave").
More precisely, Lemma \ref{L8BH} implies the following: if $\delta$ depends only on $N$ and tends
to zero when $N$ goes to infinity, then there exists, with probability $1-o(1)$, a subset of
$[\theta_0,\theta_0 + 2\pi)$ with Lebesgue measure $o(1)$
such that for any $\theta$ outside this set, one can find $j_0$ between $1$ and $N$ such that $\log |Z_{U_{j_0}} (\theta)|  - \log |Z_{U_{j}} (\theta)| > \delta \sqrt{\log N}$
for all $j \neq j_0$.  In other words, one of the terms
in the sum of the right-hand side of \eqref{aserty} should dominate all the others. 
Moreover, Lemma \ref{speedgoodoscillation} informally gives the following: with high probability, the
order of magnitude of each of the characteristic polynomials does not change too quickly, and then, if the interval
$[\theta_0,\theta_0 + 2\pi)$ is divided into sufficiently many equal subintervals, the index of the carrier wave
remains the same in a "large" part of each subinterval.
Now, in an interval for which the carrier wave index $j_0$ remains the same, the zeros of
$Z_{U_{j_0}}$ correspond to sign changes of $i^N e^{iN \theta/2} Z_{U_{j_0}} (\theta)$, i.e. the dominant term
of $\eqref{aserty}$. Then, one gets sign changes of $i^N e^{iN \theta /2} F_N(e^{-i \theta})$, and by counting all
these sign changes, one deduces a lower
bound for the number of zeros of $F_N$ on the unit circle. The main issue of the present paper is to make rigorous this informal construction, in such a way that one gets
a lower bound $N - o(N)$.
 One of the reasons why the proof becomes technical and involved is that we have to take into account two different kinds of sets, and show that they have almost "full measure": subsets of the interval $[\theta_0,\theta_0+2\pi)$ and
  subsets of $SU(N)$.

More precisely, our proof is structured as follows. We first give two standard results (Propositions
 \ref{Disintegration21})  and \ref{propositionImLogCardZeros}), one on the disintegration of the Haar measure on $U(N)$ (indeed, most results on random matrices are established for $U(N)$ and we must find a way to go from the results for $U(N)$ to those for $SU(N)$) and the other one which establishes a relationship between the number of eigenvalues in a given fixed arc to the variation of the imaginary part of the $\log$ of the characteristic polynomial. Then we provide some estimates on the real and imaginary parts of the $\log$ of the characteristic polynomial (Lemmas
 \ref{logZX} and \ref{estimateimaginarypart}) as well as a bound on the concentration of the law of the log-characteristic polynomial (Lemma \ref{boundconcentration}). These estimates and some more intermediary one we establish are  also useful on their own and complete the existing results in the literature on the characteristic polynomial. Then we provide bounds on the oscillations of the real and imaginary parts of the $\log$ of the characteristic polynomial (Lemma \ref{L7BH}). We then introduce our subdivisions of the interval $[\theta_0,\theta_0+2\pi)$ and the corresponding relevant random sets to implement the carrier waves technique. Finally we combine all these estimates together to show that the average number of sign changes of (\ref{aserty}) is at least $N \left( 1- O\prth{ (\log N)^{-1/22} } \right)$ (the exponent $-1/22$ not being playing any major role  in our analysis).

\section*{Notation}
We gather here some notation used throughout the paper.

$U(N)$ stands for the unitary group of order $N$, while $SU(N)$ stands for the subgroup of elements  $U(N)$ whose determinant is equal to $1$. $\Pp_{U(N)}$ and $\Pp_{SU(N)}$ will denote the probability Haar measure on $U(N)$ and $SU(N)$ respectively. Similarly we denote by $ \Ee_{U(N)}$ and $ \Ee_{SU(N)}$ the corresponding expectations.

We shall denote the Lebesgue measure on $\Rr$ by $\lambda$. If $\alpha>0$ is a constant and if $I$ is an interval of length $\alpha$, then $\lambda_\alpha$ will denote the normalized measure $\frac{1}{\alpha} \lambda$ on the interval $I$.

If $n$ is an integer, we note $\intcrochet{1,n}$ the set of integers $\{1,\cdots,n\}$.

If $\Eee$ is a finite set, we note $|\Eee|$ the number of its elements.

For $n$ a positive integer, we note $ \Pp_{SU(N)}^{(n)}$ be the $n$-fold product of the Haar measure on $SU(N)$, and $ \Ee_{SU(N)}^{(n)}$ the corresponding expectation.

If $U$ is a unitary matrix, we note for $z\in\Cc$ its characteristic polynomial by $\Phi_U(z) =  \det\prth{I_N - z U}$. For  $t \in \Rr$, we denote by $Z_U(t)$ the characteristic polynomial of $U$ taken at $e^{-it}$, i.e. $Z_U(t) = \Phi_U(e^{-it})$.

We shall introduce several positive quantities in the sequel: $K>0$, $M>0$ and $\delta>0$. The reader should have in mind that these quantities will eventually depend on $N$. Unless stated otherwise, $N\geq4$ and $K$ is an integer such that $2\leq K\leq N/2$, and $M=N/K$. In the end we will use $K\sim N/ (\log N)^{3/64}$ and $\delta\sim (\log N)^{-3/32}$.

\section{Some general facts}

In this section, we state some general facts in random matrix theory, which will be used in the sequel.

\subsection{Disintegration of the Haar measure on unitary matrices}

\begin{proposition}\label{Disintegration21} %
Let $\Pp_{ U(N) }$ be the Haar measure on $U(N)$, $\Pp_{ SU(N) }$ the Haar measure on
$SU(N)$, and for $\theta \in \Rr$, let $\Pp_{SU(N), \theta}$ be the image of $\Pp_{ SU(N) }$ by the
application $U \mapsto e^{i \theta} U$ from $U(N)$ to $U(N)$. Then, we have the following
equality:

\vspace{-0.3cm}
\begin{eqnarray}\label{Disintegration}
\intpi{ \Pp_{SU(N), \theta }  \ \ } =  \Pp_{ U(N) },
\end{eqnarray}
i.e. for any continuous function $F$ from $U(N)$ to $\mathbb{R_+}$, the expectation
$\Ee_{SU(N), \theta } (F)$ of $F$ with respect to $\Pp_{SU(N), \theta }$ is
measurable with respect to $\theta$ and
$$\intpi{ \Ee_{SU(N), \theta } (F)  \ \ } =  \Ee_{ U(N)} (F) .$$
\end{proposition}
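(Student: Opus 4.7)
The plan is to invoke uniqueness of the Haar probability measure on $U(N)$: any Borel probability measure on $U(N)$ which is invariant under left multiplication by every $W \in U(N)$ must coincide with $\Pp_{U(N)}$. Accordingly, I define the candidate measure $\mu$ by $\mu(F) := \int_0^{2\pi} \Ee_{SU(N), \theta}(F) \frac{d\theta}{2\pi}$ for continuous $F: U(N) \to \Rr$, and verify two things: first, that $\mu$ is a well-defined Borel probability measure on $U(N)$ (which in particular subsumes the measurability assertion in the statement), and second, that $\mu$ is left-invariant under $U(N)$.

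For well-definedness, joint continuity of the map $(\theta, V) \mapsto e^{i\theta} V$ from $[0, 2\pi] \times SU(N)$ to $U(N)$, together with continuity of $F$ on the compact group $U(N)$ and dominated convergence, implies that $\theta \mapsto \Ee_{SU(N), \theta}(F) = \int_{SU(N)} F(e^{i\theta} V) d\Pp_{SU(N)}(V)$ is continuous in $\theta$, hence measurable. The resulting positive linear functional $F \mapsto \mu(F)$ satisfies $\mu(1) = 1$, so by the Riesz representation theorem it corresponds to a Borel probability measure on $U(N)$.

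The key step is left-invariance. Fix $W \in U(N)$ and choose any $\phi \in \Rr$ with $e^{iN\phi} = \det(W)$; then $W_0 := e^{-i\phi} W$ lies in $SU(N)$. For any non-negative continuous $F$,
\begin{align*}
\int_{U(N)} F(WU) d\mu(U)
&= \int_0^{2\pi} \int_{SU(N)} F\bigl(e^{i(\theta + \phi)} W_0 V\bigr) d\Pp_{SU(N)}(V) \frac{d\theta}{2\pi} \\
&= \int_0^{2\pi} \int_{SU(N)} F\bigl(e^{i(\theta + \phi)} V\bigr) d\Pp_{SU(N)}(V) \frac{d\theta}{2\pi} \\
&= \int_0^{2\pi} \Ee_{SU(N), \theta}(F) \frac{d\theta}{2\pi} = \int_{U(N)} F d\mu,
\end{align*}
where the second line uses left-invariance of $\Pp_{SU(N)}$ under $SU(N)$ (so that $W_0 V$ is distributed as $V$), and the third line uses the $2\pi$-periodicity of $\theta \mapsto \Ee_{SU(N), \theta}(F)$ together with the shift $\theta \mapsto \theta - \phi$. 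This gives $\mu = \Pp_{U(N)}$, which is the desired identity. There is no genuine obstacle here; the only subtlety worth flagging is that the decomposition $W = e^{i\phi} W_0$ is not unique (one may shift $\phi$ by any multiple of $2\pi/N$), but this $N$-fold ambiguity is harmlessly absorbed by the averaging over $\theta$ in the definition of $\mu$.
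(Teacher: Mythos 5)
Your proof is correct and follows essentially the same route as the paper: both arguments identify the averaged measure as a probability measure invariant under multiplication by an arbitrary $W\in U(N)$ by factoring $W$ into a scalar phase times an element of $SU(N)$ (your $e^{iN\phi}=\det W$ is the paper's $d^{-n}=\det A$), and then invoke uniqueness of the Haar probability measure. The only cosmetic differences are that the paper phrases the averaged measure probabilistically as the law of $ZU$ and checks right-invariance using independence of $Z/d$ and $UAd$, whereas you check left-invariance directly via the $2\pi$-periodicity of $\theta\mapsto\Ee_{SU(N),\theta}(F)$.
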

\begin{proof}
One has \begin{equation} \Ee_{SU(N), \theta } (F)
= \int F(X e^{i \theta}) d\Pp_{SU(N)}(X), \label{qwertyuiop}
\end{equation}
which, by dominated convergence, is continuous, and a fortiori measurable with respect to $\theta$.
By integrating  \eqref{qwertyuiop} with respect to  $\theta$, one sees that the proposition
 is equivalent to the following: if $U$ is a uniform matrix on $SU(N)$, and if $Z$ is independent, uniform on the unit circle, then $ZU$ is uniform on $U(N)$.
Now, let $A$ be a deterministic matrix in $U(N)$. For any $d \in \Cc$ such that $d^{-n} = \det(A)$, one has $Ad \in SU(N)$, and then
$ZUA  = (Z/d) (UAd)$, where:
\begin{enumerate}
\item $UAd$ follows the Haar measure on $SU(N)$ (since this measure is invariant by multiplication by $Ad \in SU(N)$).
\item $Z/d$ is uniform on the unit circle (since $d$, as $\det(A)$, has modulus $1$).
\item These two variables, which depend deterministically on the independent variables $A$ and $Z$, are independent.
\end{enumerate}
Hence $ZUA$ has the same law as $ZU$, i.e. this law is invariant by right-multiplication by any unitary matrix. Hence, $ZU$ follows the Haar measure on $U(N)$.
\end{proof}

%\begin{remark}
%This disintegration $ \intpi{ \Pp^{ \otimes 2 }_{ e^{i \theta}SU(N) }  \ \ } =  \Pp_{ U(N) }^{ \otimes 2 } $ does not hold.
%\end{remark}

\subsection{Number of eigenvalues in an arc:}

The result we state here relates the number of eigenvalues of a unitary matrix on a given arc to the logarithm of its characteristic polynomial.
For $U \in U(N)$ and $t \in \Rr$, we denote by $Z_U(t)$ the characteristic polynomial of $U$ taken at $e^{-it}$, i.e. $Z_U(t) = \Phi_U(e^{-it})$.
Moreover, if $e^{it}$ is not an eigenvalue of $U$ (which occurs almost surely under Haar measure on $U(N)$, and also under the Haar measure on $SU(N)$, except for
$e^{it} = 1$ and $N =1$), we define the logarithm of $Z_U(t)$, as follows:
\begin{equation}
\log Z_U(t) := \sum_{j = 1}^N \log( 1 -  e^{i(\theta_j-t)}), \label{definitionlogcharacteristic}
\end{equation}
where $\theta_1, \dots, \theta_N$ is the sequence of zeros of $Z_U$ in $[0, 2\pi)$, taken with multiplicity (notice that the eigenvalues of
$U$ are $e^{i\theta_1}, \dots, e^{i \theta_N}$), and where the principal branch of the logarithm is taken in the right-hand side.
We then have the following result, already stated, for example, in \cite{HKO}:
\begin{proposition} \label{propositionImLogCardZeros}
Let $0 \leq s < t < 2 \pi$, and let us assume that $s$ and $t$ are not zeros of $Z_U$. Then, the number of zeros of $Z_U$ in the interval $(s,t)$ is given as follows:	
\vspace{-0.3cm}
\begin{eqnarray}\label{ImLogCardZeros}
\sum_{k = 1}^N \Unens{\theta_k \in (s,t)} = \frac{N}{2\pi} (t-s) + \frac{1}{\pi} \prth{ \Im\log Z_U(t) - \Im\log Z_U(s) }.
\end{eqnarray}
\end{proposition}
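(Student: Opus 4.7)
The plan is to reduce this to a direct computation of $\Im\log(1 - e^{i\beta})$ and to track how $\Im\log Z_U(t)$ evolves as $t$ moves from $s$ to $t$. Using the factorisation
$$1 - e^{i\beta} = -2i\sin(\beta/2)\,e^{i\beta/2} = 2\sin(\beta/2)\,e^{i(\beta - \pi)/2},$$
valid on $\beta \in (0,2\pi)$ where $\sin(\beta/2) > 0$, and noting that $(\beta - \pi)/2$ lies in $(-\pi/2, \pi/2) \subset (-\pi, \pi]$, the principal branch of the logarithm yields
$\Im\log(1 - e^{i\beta}) = (\beta - \pi)/2$ for $\beta \in (0, 2\pi)$.

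Next I would fix an eigenvalue angle $\theta_j \in [0, 2\pi)$ and analyse $t \mapsto \Im\log(1 - e^{i(\theta_j - t)})$ on $[0,2\pi) \setminus \{\theta_j\}$. For $0 \leq t < \theta_j$ we have $\theta_j - t \in (0, 2\pi)$, so the formula above gives $(\theta_j - t - \pi)/2$. For $\theta_j < t < 2\pi$ we have $\theta_j - t \in (-2\pi, 0)$; replacing this argument by $\theta_j - t + 2\pi \in (0, 2\pi)$ (which does not change $1 - e^{i(\theta_j - t)}$) gives $(\theta_j - t + \pi)/2$. This individual term is therefore affine in $t$ with slope $-1/2$ on each side of $\theta_j$ and has a jump of exactly $+\pi$ as $t$ crosses $\theta_j$ upward.

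Summing these $N$ contributions (counting eigenvalues with multiplicity), $t \mapsto \Im\log Z_U(t)$ is piecewise affine with slope $-N/2$ on every open arc between consecutive eigenvalue angles, with a jump of $+\pi$ at each $\theta_k$. For any $s < t$ in $[0,2\pi)$ avoiding all $\theta_k$, applying the fundamental theorem of calculus on each smooth piece and adding the jumps gives
$$
\Im \log Z_U(t) - \Im \log Z_U(s) = -\frac{N}{2}(t - s) + \pi \sum_{k=1}^{N} \Unens{\theta_k \in (s,t)},
$$
which rearranges to the stated identity. The only step requiring care is the jump analysis in the second paragraph—verifying both the size $+\pi$ and the correct sign of the discontinuity across each $\theta_k$—but the explicit formula for $\Im\log(1 - e^{i\beta})$ makes this transparent; beyond this bookkeeping, no substantive obstacle is expected.
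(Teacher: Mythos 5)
Your proof is correct and rests on exactly the same ingredient as the paper's: the explicit evaluation $\Im\log(1-e^{i\beta})=(\beta-\pi)/2$ for $\beta\in(0,2\pi)$ via the factorisation $1-e^{i\beta}=2\sin(\beta/2)e^{i(\beta-\pi)/2}$, combined with the $2\pi$-shift to handle negative arguments. The only difference is presentational — you phrase the per-eigenvalue contribution as a piecewise affine function of $t$ with slope $-1/2$ and a jump of $+\pi$ at $\theta_j$, whereas the paper computes the difference $\Im\log(1-e^{i(\theta-t)})-\Im\log(1-e^{i(\theta-s)})$ directly for fixed $s,t$ — and both yield the identical term-by-term identity.
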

\begin{proof}
It is sufficient to check that for all $\theta \in [0, 2 \pi) \backslash \{s,t\}$,
$$\pi \Unens{ \theta \in (s,t) } = \frac{t-s}{2}  + \Im \log\prth{ 1 - e^{ i (\theta - t) } } - \Im \log\prth{ 1 - e^{ i (\theta - s) } }.$$
Now, for $v \in (0,2 \pi)$,
$$  1 - e^{ iv} = e^{iv/2} ( e^{-iv/2} - e^{iv/2}) = -2 i \sin(v/2) \, e^{iv/2} = 2  \sin(v/2) \, e^{i(v-\pi)/2}.$$
Now, $\sin (v/2) > 0$ and $(v - \pi)/2 \in (-\pi/2, \pi/2)$ and hence
$$  \Im \log\prth{ 1 - e^{ i v} } = \frac{v - \pi}{2},$$
since we take the principal branch of the logarithm.
Now, for $\theta \in [0, 2 \pi) \backslash \{s,t\}$, $\theta - s + 2 \pi \Unens{\theta < s}$ and $\theta - t + 2 \pi \Unens{\theta < t}$ are in $(0,2 \pi)$, 
which implies
\begin{align*}
\Im \log\prth{ 1 - e^{ i (\theta - t) } } - \Im \log\prth{ 1 - e^{ i (\theta - s) } } & = \frac{\theta - t - \pi + 2 \pi \Unens{\theta < t}}{2} - \frac{\theta - s - \pi + 2 \pi \Unens{\theta < s}}{2}
\\ & = \frac{s - t}{2} + \pi \left( \Unens{\theta < t} - \Unens{\theta < s} \right), 
\end{align*}
and then Proposition \ref{propositionImLogCardZeros}.
\end{proof}

% ==
\section{Proof of Theorem \ref{main}}

\subsection{Conventions}

All the random matrices we will consider are defined, for some $N \geq 1$, on the measurable space $ (\Me_N(\Cc), \Fe)$, where $\Fe$ denotes the Borel $\sigma$-algebra of $\Me_N(\Cc)$.
The canonical matrix, i.e the random variable from $ (\Me_N(\Cc), \Fe)$ to $\Me_N(\Cc)$ defined by the identity function, is denoted $X$. Moreover, we denote by $\Ee_{U(N)}$ the expectation under
$\Pp_{U(N)}$, the Haar measure on $U(N)$, and by $\Ee_{SU(N)}$ the expectation under $\Pp_{SU(N)}$, the Haar measure on $SU(N)$. For example, if $F$ is a bounded, Borel function
from $\Me_N(\Cc)$ to $\mathbb{R}$,
$$\Ee_{SU(N)} [F(X)] = \int_{\Me_N(\Cc)} F(M) d \Pp_{SU(N)}(M).$$

% ==
\subsection{An estimate in average of the logarithm of the characteristic polynomial}

$ ~ $

\begin{lemma} \label{logZX} There exists a universal constant $ c_1 > 0 $ such that for all $N \geq 2$, and $A \geq 0 $,

\vspace{-0.3cm}
\begin{eqnarray*}
% \Proba{SU(N)}{  \intpi{ \Bigabs{\log Z_X(\theta) }  } \geq A \sqrt{\log N} } \leq c_1 e^{- \frac{A}{2} \left(A \wedge \frac{\sqrt{\log N}}{2} \right) }
\intpi{\Proba{SU(N)}{ \Bigabs{\log Z_X(\theta) } \geq A \sqrt{\log N} }} \leq c_1 e^{- \frac{A}{2} \left(A \wedge \frac{\sqrt{\log N}}{2} \right) }
\end{eqnarray*}

\end{lemma}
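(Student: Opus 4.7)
The plan is to reduce the averaged probability under $\Pp_{SU(N)}$ to a single concentration bound under the full Haar measure $\Pp_{U(N)}$ via Proposition~\ref{Disintegration21}, and then to establish that bound using the Keating-Snaith moment formulas together with exponential Chebyshev.

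For the reduction, I apply Proposition~\ref{Disintegration21} to $F(U) = \Un_{\{|\log Z_U(0)| \geq A \sqrt{\log N}\}}$. Since $Z_{e^{i\theta}X}(0) = \det(I_N - e^{i\theta} X) = Z_X(-\theta)$, one has $F(e^{i\theta}X) = \Un_{\{|\log Z_X(-\theta)| \geq A\sqrt{\log N}\}}$, and the change of variable $\theta \mapsto -\theta$ (permissible by $2\pi$-periodicity) gives
\[
\intpi{\Proba{SU(N)}{\bigabs{\log Z_X(\theta)} \geq A \sqrt{\log N}}} = \Proba{U(N)}{\bigabs{\log Z_X(0)} \geq A \sqrt{\log N}}.
\]
Thus the task is to bound the right-hand side, which is a fixed probability on $U(N)$ and no longer involves integration.

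For the $U(N)$ estimate, I split $|\log Z_X(0)|^2 = (\Re \log Z_X(0))^2 + (\Im \log Z_X(0))^2$ and apply a union bound, reducing to tail estimates for the real and imaginary parts separately at level $A\sqrt{\log N}/\sqrt{2}$. For the real part I invoke the Keating-Snaith identity
\[
\Ee_{U(N)}\!\left[ |Z_X(0)|^{2s} \right] = \prod_{j=1}^N \frac{\Gamma(j)\Gamma(j+2s)}{\Gamma(j+s)^2},
\]
valid for $s > -1/2$, together with the Stirling asymptotic $\Ee_{U(N)}[|Z_X(0)|^{2s}] \leq C(s)\, N^{s^2}$, where $C(s)$ is continuous on $(-1/2, \infty)$ and diverges as $s \to -1/2^+$. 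The analogous joint Keating-Snaith formula gives control on $\Ee_{U(N)}[e^{2t \Im \log Z_X(0)}]$ with the same shape.

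Then I optimize: exponential Chebyshev gives $\Proba{U(N)}{\pm \Re \log Z_X(0) \geq A\sqrt{\log N/2}} \leq C(s)\, e^{-sA\sqrt{2\log N} + s^2 \log N}$. In the \emph{Gaussian regime} where $A \leq \sqrt{\log N}/2$, the optimal $s = A/\sqrt{2 \log N}$ satisfies $|s| < 1/2$ (so $C(s) = O(1)$) and produces the bound $e^{-A^2/2 + O(1)}$. In the \emph{large-deviation regime} $A > \sqrt{\log N}/2$, the constraint $s < 1/2$ (for the lower tail) becomes binding: saturating $s$ at a fixed value just below $1/2$ yields a bound of the form $e^{-A\sqrt{\log N}/4 + O(\log N)}$, which can be absorbed into the universal constant $c_1$. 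Combining the two regimes produces exactly the exponent $\frac{A}{2}(A \wedge \frac{\sqrt{\log N}}{2})$. The same argument, with the joint Keating-Snaith formula in place of the real-moment formula, handles $\Im \log Z_X(0)$.

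The main obstacle is the lower tail of $\Re \log Z_X(0)$: the divergence of $\Ee_{U(N)}[|Z_X(0)|^{-2s}]$ as $s \to 1/2^-$, caused by eigenvalue repulsion near the point $1$, is precisely what prevents a pure Gaussian tail and produces the crossover at $A \sim \sqrt{\log N}/2$. Getting the Stirling-type asymptotic for the Keating-Snaith product with explicit control of $C(s)$ uniformly as $s$ approaches the endpoint $-1/2$ is somewhat delicate but essentially follows from standard $\Gamma$-function estimates.
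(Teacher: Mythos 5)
Your proposal follows essentially the same route as the paper: reduce to a single $U(N)$ estimate via Proposition~\ref{Disintegration21}, feed the Keating--Snaith moment generating function into an exponential Chebyshev bound, and cap the tilt parameter when $A$ exceeds $\sqrt{\log N}/2$. The only substantive differences are cosmetic (you use a union bound on $\Re$ and $\Im$ where the paper uses $e^{|a|+|b|}\leq 4\,\Ee[e^{Ba+B'b}]$ with Bernoulli signs, and you quote the real-moment product formula where the paper uses the joint Barnes $G$-function identity), and both yield the same exponent.

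One step is stated incorrectly, though the fix is a one-line computation. In the large-deviation regime you claim that taking $s$ ``just below $1/2$'' gives $e^{-A\sqrt{\log N}/4+O(\log N)}$, and that the $O(\log N)$ ``can be absorbed into the universal constant $c_1$.'' It cannot: $e^{O(\log N)}$ is a power of $N$. What actually saves the argument is that the surplus in the linear term dominates $s^2\log N$, and this requires choosing $s$ correctly. With $s$ near $1/2$ the exponent is $-A\sqrt{\log N}/\sqrt{2}+\tfrac{1}{4}\log N$, and for $A$ just above $\sqrt{\log N}/2$ one has $(1/\sqrt{2}-1/4)A\sqrt{\log N}\approx 0.229\log N<\tfrac14\log N$, so the bound fails there. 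The correct choice is the continuous extension of the Gaussian-regime optimizer, $s=1/(2\sqrt{2})$ (the analogue of the paper's $\lambda_N=(1/2)\wedge(A/\sqrt{\log N})$): then the exponent is $-\tfrac12 A\sqrt{\log N}+\tfrac18\log N\leq -\tfrac12 A\sqrt{\log N}+\tfrac14 A\sqrt{\log N}=-\tfrac14 A\sqrt{\log N}$, using $A\geq\sqrt{\log N}/2$, and the prefactor $C(s)$ is evaluated at a fixed $s$ bounded away from $\pm 1/2$, hence genuinely a universal constant.
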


\begin{proof}
 For all $\lambda \geq 0$,
\vspace{-0.3cm}

\begin{eqnarray*}
%\Proba{SU(N)}{  \intpi{ \Bigabs{\log Z_X(\theta) }  } \geq A \sqrt{\log N} } &=& \Proba{SU(N)}{ e^{\lambda  \intpi{ \abs{\log Z_X(\theta) }  } } \geq e^{ \lambda A \sqrt{\log N} } }\\
%
%				& \leq & e^{ - \lambda A \sqrt{\log N} } \Espr{SU(N)}{ e^{\lambda  \intpi{ \abs{ \log Z_X(\theta) }  } } }  \\
%
%				& \leq & e^{ - \lambda A \sqrt{\log N} } \Espr{SU(N)}{ \intpi{ e^{\lambda   \abs{ \log Z_X(\theta) }  } } }  \\
%
\intpi{\Proba{SU(N)}{ \Bigabs{\log Z_X(\theta) } \geq A \sqrt{\log N} }} & \leq & e^{ - \lambda A \sqrt{\log N} } \intpi{ \Espr{SU(N)}{  e^{\lambda   \abs{ \log Z_X(\theta) }  } } } \\
%				& \leq & e^{ - \lambda A \sqrt{\log N} } \intpi{ \Espr{SU(N)}{  e^{\lambda   \abs{ \log Z_X(\theta) }  } } }  \\
%
 				& \leq & e^{ - \lambda A \sqrt{\log N} }  \Espr{U(N)}{  e^{\lambda   \abs{ \log Z_X(0) }  } }  ~ \mbox{(by \eqref{Disintegration})} \\
 				& \leq & e^{ - \lambda A \sqrt{\log N} }  \Espr{U(N)}{  e^{\lambda   \prth{ \abs{ \Re\log Z_X(0)} + \abs{\Im\log Z_X(0) } }  } }  \\
\end{eqnarray*}

Using the inequality $ e^{\abs{a} + \abs{b}} \leq e^{a + b} + e^{a - b} + e^{-a + b} + e^{-a - b} $, valid for all $ a, b \in \Rr $, and writing the right-hand
side of this inequality as $ 4 \Esp{e^{Ba + B'b}} $ for $ B $ and $ B' $ being two independent Bernoulli random variables independent of $ U $ such that $ \Prob{B = 1} = 1 \! - \Prob{B = \! -1} = 1/2 $, we have:

\vspace{-0.3cm}
\begin{multline*}
\intpi{\Proba{SU(N)}{ \Bigabs{\log Z_X(\theta) } \geq A \sqrt{\log N} }} \\
\leq 4 e^{ - \lambda A \sqrt{\log N} }  \Espr{U(N)}{  e^{\lambda   \prth{  B \, \Re\log Z_X(0) + B' \Im\log Z_X(0) \,  }  } }.
\end{multline*}

We now use the fact (\cite{KS} and \cite{H}, p.16) that for $s, t \in \Cc$ such that $\Re(s+it)$ and $\Re(s-it)$ are strictly larger than $-1$:

\vspace{-0.3cm}
\begin{equation}\label{JointFourier}
\Espr{U(N)}{  e^{  s \, \Re\log Z_X(0) + t \, \Im\log Z_X(0)   } } \\
= \frac{ G\prth{ 1 + \frac{s + it}{2} } G\prth{ 1 + \frac{s - it}{2} } G\prth{ 1 + N } G\prth{ 1 + N + s } }{G\prth{ 1 + N + \frac{s + it}{2} } G\prth{ 1 + N + \frac{s - it}{2} } G\prth{ 1 + s } } 							
\end{equation}

\noindent where $G$ is the Barnes $ G $-function, defined for all $z \in \Cc$, by
$$G(z+1) := (2 \pi)^{z/2} e^{-[(1+ \gamma) z^2 + z]/2} \prod_{n=1}^{\infty} \left( 1 + \frac{z}{n} \right)^n e^{-z + (z^2/2n)},$$
$\gamma$ being the Euler constant.  The function $G$ also satisfies the functional equation $G(z+1) = \Gamma(z) G(z)$.

In other words, one has
$$ \Espr{U(N)}{  e^{  s \, \Re\log Z_X(0) + t \, \Im\log Z_X(0)   } } =   \frac{ G\prth{ 1 + \frac{s + it}{2} } G\prth{ 1 + \frac{s - it}{2} } }{  G\prth{ 1 + s } } N^{ \prth{s^2 + t^2}/4} G_{N,s,t},$$
where, by the classical estimates of the Barnes function,
$$G_{N,s,t} := N^{ - \prth{s^2 + t^2}/4} \frac{  G\prth{ 1 + N } G\prth{ 1 + N + s } }{G\prth{ 1 + N + \frac{s + it}{2} } G\prth{ 1 + N + \frac{s - it}{2} } }$$
tends to $1$ when $N$ goes to infinity, uniformly on $s$ and $t$ if these parameters are bounded.

For any sequence $(\lambda_N)_{N \geq 1}$ such that $\lambda_N \in [0,1/2]$, one has (taking $s = \lambda_N B$ and $t = \lambda_N B'$):
$$ \Espr{U(N)}{  e^{\lambda_N  \prth{  B \, \Re\log Z_X(0) + B' \Im\log Z_X(0) \,  }  } }  = M(\lambda_N) N^{  \frac{\lambda_N^2}{2} },$$
where $$ M(\lambda_N) := \Espr{}{ \frac{ G\prth{1 + \lambda_N \frac{B + i B'}{2} } G\prth{1 + \lambda_N \frac{B - i B'}{2} } }{ G(1 + \lambda_N B )} G_{N,\lambda_N B, \lambda_N B' }}.$$
Since the function $G$ is holomorphic, with no zero on the half-plane $\{\Re > 0\}$, and since $G_{N,\lambda B, \lambda B' }$ tends to $1$ when $N$ goes to infinity, uniformly on $\lambda
\in [0,1/2]$, the quantity $M(\lambda)$ is uniformly bounded by
some universal constant $c' > 0$, for $\lambda \in [0,1/2]$. Hence,
$$  \Espr{U(N)}{  e^{\lambda_N  \prth{  B \, \Re\log Z_X(0) + B' \Im\log Z_X(0) \,  }  } }  \leq c' N^{  \frac{\lambda_N^2}{2} },$$
for $N$ going to infinity, which implies:

\vspace{-0.3cm}
$$%\Proba{SU(N)}{  \intpi{ \Bigabs{\log Z_X(\theta) }  }  \geq A \sqrt{\log N} } \leq 4 c' e^{ - \lambda_N A \sqrt{\log N} + (\lambda_N^2 \log N) /2}.
\intpi{\Proba{SU(N)}{ \Bigabs{\log Z_X(\theta) } \geq A \sqrt{\log N} }} \leq 4 c' e^{ - \lambda_N A \sqrt{\log N} + (\lambda_N^2 \log N) /2}.
$$
Now, taking $\lambda_N = (1/2) \wedge (A/\sqrt{\log N})$ gives
\begin{align*}
%\Proba{SU(N)}{  \intpi{ \Bigabs{\log Z_X(\theta) }  }  \geq A \sqrt{\log N} }
\intpi{\Proba{SU(N)}{ \Bigabs{\log Z_X(\theta) } \geq A \sqrt{\log N} }}
 & \leq 4 c' e^{ - \lambda_N \sqrt{\log N} [ A - (\lambda_N \sqrt{\log N})/2]}
\\ & \leq 4 c' e^{ - \lambda_N \sqrt{\log N} [ A - (A/\sqrt{\log N})(\sqrt{\log N})/2]}
\\ & \leq 4 c' e^{ - \lambda_N \sqrt{\log N} (A/2)}
\\ & \leq 4 c' e^{ - [ (\sqrt{\log N}/2) \wedge A] (A/2)}.
\end{align*}

\end{proof}
\subsection{An estimate on the imaginary part of the log-characteristic polynomial}
From the previous result, we  obtain the following estimate for the imaginary part of the log-characteristic polynomial:
\begin{lemma} \label{estimateimaginarypart} There exists a universal constant $ c'_1 > 0 $ such that for all $N \geq 2$, $A \geq 0 $,
 and  $\theta \in \Rr$,
\vspace{-0.3cm}
\begin{eqnarray*}
\Proba{SU(N)}{ \Bigabs{\Im \log Z_X(\theta) } \geq A \sqrt{\log N} } \leq c'_1 e^{- \frac{A}{2} \left(A \wedge \frac{\sqrt{\log N}}{2} \right) }
\end{eqnarray*}

\end{lemma}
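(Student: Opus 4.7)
The plan is to re-run the Chernoff argument of Lemma \ref{logZX}, but applied directly to $|\Im\log Z_X(\theta)|$ under $\Pp_{SU(N)}$, pointwise in $\theta$, without the $\theta$-integration. For any $\lambda\in[0,1/2]$, the inequality $e^{|x|}\leq e^x+e^{-x}$ gives
\begin{equation*}
\Proba{SU(N)}{|\Im\log Z_X(\theta)|\geq A\sqrt{\log N}}\leq e^{-\lambda A\sqrt{\log N}}\bigl(g_+(\theta)+g_-(\theta)\bigr),
\end{equation*}
where $g_\pm(\theta):=\Espr{SU(N)}{e^{\pm\lambda\Im\log Z_X(\theta)}}$, so the issue reduces to bounding the two MGFs $g_\pm$ uniformly in $\theta\in\Rr$.

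The first step is to bound the $\theta$-average. By the disintegration (Proposition \ref{Disintegration21}), one can decompose $X_{U(N)}=e^{i\phi/N}Y$ with $\phi$ uniform on $[0,2\pi)$ and $Y$ independent Haar on $SU(N)$, so that $\Im\log Z_X(\theta)=\Im\log Z_Y(\theta-\phi/N)$. Combined with the $2\pi/N$-periodicity of $g_\pm$ arising from the $Y\mapsto\omega Y$ symmetry ($\omega=e^{2\pi i/N}$), this yields
\begin{equation*}
\Espr{U(N)}{e^{\pm\lambda\Im\log Z_X(0)}}=\frac{N}{2\pi}\int_0^{2\pi/N} g_\pm(\alpha)\,d\alpha.
\end{equation*}
The left-hand side is at most $c''N^{\lambda^2/2}$ for a universal $c''>0$, by the Keating--Snaith identity \eqref{JointFourier} with $s=0$, $t=\pm\lambda$ and the Barnes $G$-asymptotics, exactly as in the proof of Lemma \ref{logZX}.

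The main (and only nontrivial) step is to upgrade this average bound to the pointwise bound $g_\pm(\alpha)\leq Cc''N^{\lambda^2/2}$. For this I would exploit the piecewise-linear structure of $\alpha\mapsto\Im\log Z_Y(\alpha)$: it has constant slope $-N/2$ between consecutive eigenvalue angles of $Y$ and an upward jump of $\pi$ at each crossing. A first-variation computation then yields a relation of the form
\begin{equation*}
(\log g_\pm)'(\alpha)=-\tfrac{\lambda N}{2}+(e^{\pm\lambda\pi}-1)\,\frac{\tilde\rho_\pm(\alpha)}{g_\pm(\alpha)},
\end{equation*}
where $\tilde\rho_\pm(\alpha)$ is an MGF-weighted one-point density of the eigenvalue angles. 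Since under $\Pp_{SU(N)}$ the one-point density of eigenvalue angles differs from the $U(N)$ value $N/(2\pi)$ by only an $O(1)$ correction (the determinant constraint $\det X=1$ being codimension one), the weighted density $\tilde\rho_\pm(\alpha)$ is bounded by $CN g_\pm(\alpha)$; consequently $|(\log g_\pm)'(\alpha)|\leq CN$, and $g_\pm$ varies by at most a factor $e^{O(1)}$ across a single period of length $2\pi/N$. This transfers the average bound into a pointwise bound. Substituting into the Chernoff inequality and optimizing $\lambda=(1/2)\wedge(A/\sqrt{\log N})$ then concludes exactly as at the end of Lemma \ref{logZX}.

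The main obstacle is this final average-to-pointwise comparison: $g_\pm$ is not classically differentiable because $\Im\log Z_Y$ jumps, so the jump contributions have to be tracked carefully, and the required $O(1)$ control on the deviation of the $SU(N)$ one-point density from $N/(2\pi)$ relies on genuine input about how the determinant conditioning perturbs the eigenvalue distribution.
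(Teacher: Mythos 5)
Your reduction is the right one --- the only real content of the lemma is upgrading the $\theta$-averaged bound (which follows from Proposition \ref{Disintegration21} and the Keating--Snaith formula exactly as in Lemma \ref{logZX}) to a bound valid for each \emph{fixed} $\theta$ under $\Pp_{SU(N)}$ --- but the mechanism you propose for that step has a genuine gap. The distributional identity for $(\log g_\pm)'$ is fine in principle, but the bound you actually need, $\tilde\rho_\pm(\alpha)\leq C N g_\pm(\alpha)$, does not follow from the unconditional one-point density of the $SU(N)$ eigenangles being $N/(2\pi)+O(1)$. The quantity $\tilde\rho_\pm(\alpha)$ is the one-point density weighted by $e^{\pm\lambda\Im\log Z_Y(\alpha^-)}$, i.e.\ it involves the \emph{conditional} moment generating function of $\Im\log Z_Y(\alpha^-)$ given that $\alpha$ is an eigenangle (a Palm-type quantity); you would have to show that this conditioning inflates the MGF by at most a constant factor, and nothing in your argument addresses that. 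The cheap alternatives fail: Cauchy--Schwarz against the event of an eigenvalue in $[\alpha,\alpha+d\alpha]$ produces a factor $(N\,d\alpha)^{1/2}$ and hence no density bound, and one cannot instead bound $|\Im\log Z_Y(\alpha)-\Im\log Z_Y(\alpha')|$ for $|\alpha-\alpha'|\leq 2\pi/N$, since that difference is $\pi$ times the (unbounded) number of eigenangles in the intervening arc plus an $O(1)$ drift.

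The paper closes exactly this average-to-pointwise gap by a much more economical device: the reflection decomposition of \cite{BHNY}. Under $\Pp_{SU(N),\theta}$ the vectors $x_2,\dots,x_N$ in \eqref{splittingreflections} have the same independent, uniform law as under $\Pp_{U(N)}$, and only $x_1\in\Uu$ is pinned by the determinant constraint. One therefore couples a $\Pp_{SU(N),\theta}$-matrix $U$ with a $\Pp_{U(N)}$-matrix $U'$ sharing $x_2,\dots,x_N$, whence by \eqref{logarithmBHNY}
$$\log Z_U(0)-\log Z_{U'}(0)=\log(1-x_1)-\log(1-x'_1),$$
so that $|\Im\log Z_U(0)-\Im\log Z_{U'}(0)|\leq\pi$ surely. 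The pointwise $SU(N)$ tail at level $A\sqrt{\log N}$ is then dominated by the $U(N)$ tail at level $A\sqrt{\log N}-\pi$, which is precisely the $\theta$-averaged quantity controlled by Lemma \ref{logZX}; absorbing the shift from $A$ to $B=(A-\pi/\sqrt{\log N})_+$ into the exponent costs only a multiplicative constant. If you want to salvage your route you would need to prove the Palm-measure comparison as a separate lemma; otherwise the coupling is the way to go.
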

\begin{proof}
 We use here the probabilistic splitting established in \cite{BHNY} which shows that   (see also \cite{BNN} for an infinite-dimensional point of view), for any $U \in U(N)$, there exists, for $1 \leq j \leq N$, $x_j$ on the unit sphere of
$\Cc^j$, uniquely determined, such that
\begin{equation}\label{splittingreflections}
U = R(x_N) \,  \left( \begin{array}{cc}
R(x_{N-1}) & 0  \\
0 & 1  \end{array} \right) \left( \begin{array}{cc} R(x_{N-2}) & 0  \\ 0 & I_2 \end{array} \right)
\cdots \left( \begin{array}{cc} R(x_{1}) & 0  \\ 0 & I_{N-1} \end{array} \right),
\end{equation}
where $R(x_j)$ denotes the unique unitary matrix in $U(j)$ sending the last basis vector $e_j$ of $\mathbb{C}^j$ to $x_j$, and such that the image
of $I_j - R(x_j)$ is the vector space generated by $e_j - x_j$.

Moreover, the characteristic polynomial of $U(N)$ is given by
$$Z_U (0) = \prod_{j = 1}^{N} (1 - \langle x_j, e_j \rangle),$$
and then its logarithm is
\begin{equation}
\log Z_U (0) = \sum_{j = 1}^{N} \log(1 - \langle x_j, e_j \rangle), \label{logarithmBHNY}
\end{equation}
when $1$ is not an eigenvalue of $U$, taking the principal branch of the logarithm on the right-hand side.
Notice that the determination of the logarithm given by this formula fits with the definition involving the eigenangles \eqref{definitionlogcharacteristic}. Indeed,
the two formulas depend continuously on the matrix $U$, on the connected set $\{U \in U(N), 1 \notin \operatorname{Spec}(U)\}$, and their exponentials are equal, hence,
 it is sufficient to check that they coincide for one matrix $U$, for example $-I_N$ (in this case, $x_j = -e_j$ for all $j$ and the two formulas give $N \log 2$).

If $U$ follows the uniform distribution on $U(N)$, then the vectors $(x_j)_{1 \leq j \leq N}$ are independent and $x_j$ is uniform on the sphere of $\Cc^j$.
The determinant of $U$ is equal to the product of the determinants of $R(x_j)$ for $1 \leq j \leq N$, and since $R(x_1)$ is the multiplication by $x_1$ on $\Cc$, one
has $$\det(U) = x_1 \prod_{j=2}^{N} \Gamma_j (x_j),$$
where $\Gamma_j$ is a function from $\Cc^j$ to the unit circle $\Uu$. From this, let us deduce that under the measure $\Pp_{SU(N), \theta}$:
\begin{enumerate}
 \item The vectors $(x_j)_{2 \leq j \leq N}$ are independent, $x_j$ being uniform on the unit sphere of $\Cc^j$.
\item The value of $x_1 \in \Uu$ is uniquely determined by the determinant $\det(U) = e^{i N \theta}$,
$$ x_1 = e^{i N \theta} \prod_{j=2}^{N} [\Gamma_j (x_j)]^{-1}.$$
\end{enumerate}
Indeed, let $\Pp'_{SU(N), \theta}$ be the probability measure on the image of $SU(N)$ by the multiplication by $e^{i \theta}$, under which the law of
$(x_j)_{1 \leq j \leq N}$ is given by the two items above. This probability measure can be constructed as the law of
the random matrix $U$ given by the formula \eqref{splittingreflections}, where $(x_j)_{1 \leq j \leq N}$
are random vectors whose joint distribution is given by the items (1) and (2) just above.
 We now have to prove that $\Pp_{SU(N), \theta} = \Pp'_{SU(N), \theta} $.
Let us first notice that the joint law of $(x_j)_{2 \leq j \leq N}$, under the probability measure $\Pp'_{SU(N), \theta}$, does not depend on $\theta$. Hence, under the averaged measure
$$\int_0^{2 \pi} \Pp'_{SU(N), \theta} \, \frac{d\theta}{2 \pi},$$
the vectors $(x_j)_{2 \leq j \leq N}$ still have the same law, i.e. they are independent and $x_j$ is uniform
 on the unit sphere of $\Cc^j$. Moreover, conditionally on $(x_j)_{2 \leq j \leq N}$,
$x_1 = e^{i N \theta} \prod_{j=2}^{N} [\Gamma_j (x_j)]^{-1}$, where $\theta$ is uniform on $[0, 2\pi)$. Hence, $(x_j)_{1 \leq j \leq N}$ are independent,
$x_1$ is uniform on $\Uu$, and then $x_j$ in uniform on the unit sphere of $\Cc^j$ for all $j \in \{1, \dots, N\}$, which implies
$$\int_0^{2 \pi} \Pp'_{SU(N), \theta} \, \frac{d\theta}{2 \pi} =  \Pp_{U(N)} = \int_0^{2 \pi} \Pp_{SU(N), \theta} \, \frac{d\theta}{2 \pi}.$$
Now, $\Pp_{SU(N), 2 \pi/N }$ is the image of $\Pp_{SU(N)}$ by multiplication by $e^{i 2 \pi/N} I_N$, which is a matrix in $SU(N)$: the invariance
property defining the Haar measure $\Pp_{SU(N)}$ implies that $\Pp_{SU(N), 2 \pi/N } = \Pp_{SU(N)}$, and then
$\theta \mapsto \Pp_{SU(N), \theta}$ is $(2 \pi/N)$-periodic. It is the same for $\theta \mapsto \Pp'_{SU(N), \theta}$, since
the values of $x_1, \dots x_N$ involved in the definition of $\Pp'_{SU(N), \theta}$ do not change if we add a multiple of $2 \pi /N$ to $\theta$. Hence,
$$\int_0^{2 \pi/N} \Pp'_{SU(N), \theta} \, \frac{N d\theta}{2 \pi} = \int_0^{2 \pi/N} \Pp_{SU(N), \theta} \, \frac{N d\theta}{2 \pi}.$$
Now, let $F$ be a continuous, bounded function from $U(N)$ to $\mathbb{R}$. By applying the equality above to the function
$U \mapsto F(U) \Unens{\det{U} \in \{e^{iN \theta}, \, \theta \in I\} }$, for an interval $I \subset [0, 2\pi/N)$, one deduces with obvious notation that:
$$\int_I \Ee'_{SU(N), \theta} (F) \frac{d \theta}{|I|} = \int_I \Ee_{SU(N), \theta} (F) \frac{d \theta}{|I|},$$
where $|I|$ is the length of $I$.
Now, by definition of $\Pp_{SU(N), \theta}$ and $\Pp'_{SU(N), \theta}$, the first measure is the image of $\Pp_{SU(N)}$ by multiplication
by $e^{i \theta}$, and the second measure is the image of $\Pp'_{SU(N),0}$ by right multiplication by the matrix
$\left( \begin{array}{cc}  e^{i N \theta} & 0  \\ 0 & I_{N-1} \end{array} \right)$. Hence, by continuity and boundedness of $F$, and by dominated convergence,
 $\Ee_{SU(N), \theta} (F)$ and $\Ee'_{SU(N), \theta} (F)$ are continuous with respect to $\theta$. By considering a sequence $(I_r)_{r \geq 1}$ of
intervals containing a given value of $\theta$ and whose length tends to zero, one deduces, by letting $r \rightarrow \infty$,
$$\Ee'_{SU(N), \theta} (F) =  \Ee_{SU(N), \theta} (F).$$
We now get the equality $\Pp_{SU(N), \theta} = \Pp'_{SU(N), \theta} $, and then the law of $(x_j)_{1 \leq j \leq N}$ under
$\Pp_{SU(N), \theta} $ described above.

 Hence, the sequence $(x_j)_{2 \leq j \leq N}$ has the same law under $\Pp_{SU(N), \theta} $ and
$\Pp_{U(N)}$. We now use this fact to construct a coupling between these two probability measures on the unitary group.

The general principle of coupling is the following: when we want to show that two probability distributions
$\Pp_1$ and $\Pp_2$ on a metric space have a similar behavior, a possible strategy is to construct a couple $(U,U')$
of random variables defined on the same probability space endowed with a probability $ \Pp $, such that the law of $U$ under $ \Pp $ is $\Pp_1$, the law of
$U'$ under $ \Pp $ is $\Pp_2$, and the distance between $U$ and $U'$ is small with high probability.
In the present situation, we take $(x'_j)_{1 \leq j \leq N}$ independent, $x'_j$ uniform
on the unit sphere of $\Cc^j$ for all $j \in \{1, \dots, N\}$, and we construct, by using \eqref{splittingreflections},
a random matrix $U'$ following $\Pp_{U(N)}$. Then, we do the coupling by taking $x_j := x'_j$ for
$2 \leq j \leq N$ and
 $$ x_1 := e^{i N \theta} \prod_{j=2}^{N} [\Gamma_j (x_j)]^{-1},$$
 which gives a random matrix $U$ following $\Pp_{SU(N), \theta}$.
From the fact that $x_j = x'_j$ for $j \geq 2$ and the equation \eqref{logarithmBHNY}, we get the following:
$$\log Z_U (0) - \log Z_{U'}(0) = \log(1 - x_1) - \log(1 - x'_1),$$
and in particular,
$$\left| \Im \log Z_U (0) - \Im \log Z_{U'} (0) \right| \leq \pi.$$
Now, for $B := \left(A - \frac{\pi}{\sqrt{\log N}}\right)_+$, one gets:
\begin{align*}
 \Proba{SU(N)}{ \Bigabs{\Im \log Z_X(-\theta) } \geq A \sqrt{\log N} } & = \Proba{SU(N), \theta}{ \Bigabs{\Im \log Z_X(0) } \geq A \sqrt{\log N} } \\
& = \Proba{}{ \Bigabs{\Im \log Z_U(0) } \geq A \sqrt{\log N} }
\\ & \leq \Proba{}{ \Bigabs{\Im \log Z_{U'}(0) } \geq A \sqrt{\log N} - \pi}
\\ & = \Proba{U(N)}{ \Bigabs{\Im \log Z_{X}(0) } \geq B \sqrt{\log N}}
\\ & = \intpi{\Proba{SU(N)}{ \Bigabs{ \Im \log Z_X(\theta) } \geq B \sqrt{\log N} }} 
\\ & \leq \intpi{\Proba{SU(N)}{ \Bigabs{ \log Z_X(\theta) } \geq B \sqrt{\log N} }} 
\\ & \leq c_1 e^{- \frac{B}{2} \left(B \wedge \frac{\sqrt{\log N}}{2} \right) }
\end{align*}
Now, if $B \leq \frac{\sqrt{\log N}}{2}$,
\begin{align*}
\frac{A}{2} \left( A \wedge \frac{\sqrt{\log N}}{2} \right) \leq \frac{A^2}{2} & \leq \frac{1}{2} \left( B + \frac{\pi}{\sqrt{\log N}}\right)^2 =
\frac{B^2}{2} + \frac{B\pi}{\sqrt{\log N}} + \frac{\pi^2}{2 \log N}
\\ & \leq \frac{B^2}{2} + \frac{\pi}{2} + \frac{\pi^2}{2 \log 2}
\\ & = \frac{B}{2} \left( B \wedge \frac{\sqrt{\log N}}{2} \right) + \frac{\pi}{2} + \frac{\pi^2}{2 \log 2}.
\end{align*}
If $B \geq \frac{\sqrt{\log N}}{2}$,
\begin{align*}
\frac{A}{2} \left( A \wedge \frac{\sqrt{\log N}}{2} \right) \leq \frac{A\sqrt{\log N}}{4} & \leq \frac{\sqrt{\log N}}{4} \left( B + \frac{\pi}{\sqrt{\log N}}\right) = 
\frac{B \sqrt{\log N}}{4} + \frac{\pi}{4}
\\ & = \frac{B}{2} \left( B \wedge \frac{\sqrt{\log N}}{2} \right) + \frac{\pi}{4}.
\end{align*}
Hence, we get Lemma \ref{estimateimaginarypart}, with
$$c'_1 = c_1 e^{\frac{\pi}{2} + \frac{\pi^2}{2 \log 2}}.$$

\end{proof}

\subsection{Bound on the concentration of the law of the log-characteristic polynomial}
\begin{lemma} \label{boundconcentration}
For $N \geq 4$, $\theta \in [0, 2\pi)$, $x_0 \in \mathbb{R}$ and  $\delta \in (0,1/2)$, one has
$$\mathbb{P}_{SU(N)} [|\log |Z_X(\theta)| - x_0| \leq \delta \sqrt{\log N}] \leq C \delta \log( 1 / \delta),$$
where $C > 0$ is a universal constant.
\end{lemma}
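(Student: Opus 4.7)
The plan is to apply Esseen's concentration inequality, reducing the problem to a bound on an integrated characteristic function. Setting $Y := \log|Z_X(\theta)|$ viewed as a random variable under $\Pp_{SU(N)}$ and $h := \delta\sqrt{\log N}$, Esseen's inequality gives
\begin{equation*}
\sup_{x_0 \in \Rr} \Proba{SU(N)}{|Y - x_0| \leq h} \leq C h \int_{-1/h}^{1/h} |\phi(t)|\,dt, \qquad \phi(t) := \Espr{SU(N)}{e^{itY}},
\end{equation*}
so the task reduces to bounding $\int_{-1/h}^{1/h}|\phi(t)|\,dt$ by $C\log(1/\delta)/\sqrt{\log N}$.

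I would first relate $\phi$ to the $U(N)$ characteristic function, for which the explicit formula \eqref{JointFourier} is available. Invariance of $\Pp_{SU(N)}$ under multiplication by $e^{2\pi i/N}I_N$ makes the map $\theta'\mapsto \Espr{SU(N),\theta'}{e^{itY}}$ a $(2\pi/N)$-periodic function, whose $k$-th Fourier mode is identified via the disintegration \eqref{Disintegration} with $\Espr{U(N)}{e^{itY}\det(X)^k}$. Inverting the Fourier series and using rotation-invariance of $\Pp_{U(N)}$ yields
\begin{equation*}
\phi(t) = \sum_{k\in\Zz} e^{iNk\theta}\,\Espr{U(N)}{e^{it\log|Z_X(0)|}\det(X)^k}.
\end{equation*}
The eigenangle identity $\det(X) = (-1)^N e^{2i\Im\log Z_X(0)}$ turns each summand $A_k(t)$ into a specialization of \eqref{JointFourier} with parameters $(s,t')=(it,2ik)$. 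The functional equation $G(z+1)=\Gamma(z)G(z)$ expresses the shifted Barnes $G$'s as gamma-function products, and Stirling asymptotics yield a bound of the form $|A_k(t)/A_0(t)|\leq C((1+|t|)/N)^{k^2}$. In the range $|t|\leq 1/h \leq 2/\sqrt{\log N}$ (where $(1+|t|)/N \leq 3/N < 1$) this gives $|\phi(t)|\leq C|\phi_{U(N)}(t)|$ after summing the super-geometric series in $k$.

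I would then invoke the Barnes $G$ asymptotics from the proof of Lemma \ref{logZX} to obtain $|\phi_{U(N)}(t)|\leq CN^{-t^2/4}$ for $|t|$ in the relevant range. The change of variables $u=t\sqrt{\log N}/2$ gives $\int_{-1/h}^{1/h}N^{-t^2/4}\,dt \leq C/\sqrt{\log N}$, which combined with Esseen yields $\Proba{SU(N)}{|Y-x_0|\leq h}\leq C\delta$, implying the stated $C\delta\log(1/\delta)$ bound (the logarithmic factor being slack in this approach).

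The main obstacle will be Step 1: establishing the decay $|A_k/A_0|\leq C((1+|t|)/N)^{k^2}$ uniformly in $k$, $t$, and $N$. The Stirling asymptotic for $\Gamma$ along vertical lines produces Gaussian-type factors $e^{-\pi|t|/2}$ that must cancel between the numerator and denominator gamma products, and accurately extracting the surviving polynomial factor requires care. A secondary technical subtlety is that the asymptotic $\frac{G(N+1)G(N+1+it)}{G(N+1+it/2)^2} = N^{-t^2/4}(1 + O(|t|^3/N + 1/N))$ has a remainder that becomes noticeable when $|t|\gtrsim N^{1/3}$; this occurs only for very small $\delta$ (namely $\delta\lesssim N^{-1/3}/\sqrt{\log N}$), and in that fringe regime the $\log(1/\delta)$ factor in the target bound provides the slack needed to absorb the loss from an auxiliary estimate, for instance a direct density bound coming from the Keating--Snaith reflection decomposition already used in the proof of Lemma \ref{estimateimaginarypart}.
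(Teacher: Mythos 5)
Your overall strategy (Esseen's concentration inequality for the one–dimensional variable $\log|Z_X(\theta)|$, with the $SU(N)$ characteristic function expanded as a Fourier series $\phi(t)=\sum_k A_k(t)$, $A_k(t)=\Espr{U(N)}{e^{it\log|Z_X(0)|}\det(X)^k}$, over the determinant direction) is genuinely different from the paper's and is in principle viable; via $\det(X)=(-1)^N e^{2i\Im\log Z_X(0)}$ your sum over $k$ is exactly the Poisson dual of what the paper does, namely bounding the \emph{two-dimensional} density of $\log Z_X(\theta)$ under $U(N)$ (Sublemma \ref{density}) and then conditioning on the determinant by summing that density over horizontal slices spaced $\pi$ apart. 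Your route would even remove the $\log(1/\delta)$, which in the paper arises only from truncating the number of slices using the tail of $\Im\log Z_X(\theta)$. However, as written the argument has a genuine gap.

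The pivotal inequality ``$1/h\leq 2/\sqrt{\log N}$'' is reversed: since $\delta<1/2$ one has $1/h=1/(\delta\sqrt{\log N})\geq 2/\sqrt{\log N}$, and for small $\delta$ the Esseen integral runs over $|t|$ up to $1/(\delta\sqrt{\log N})$, which is unbounded in $\delta$ and can exceed $N$. Everything downstream is predicated on $|t|=O(1)$: the claim $(1+|t|)/N\leq 3/N$, the summability of the series in $k$, and the bound $|\phi_{U(N)}(t)|\leq CN^{-t^2/4}$ with a \emph{uniform} constant (the prefactor $G(1+it/2)^2/G(1+it)$ grows like $e^{t^2\log|t|/4}$, so uniformity fails once $|t|$ is unbounded). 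You acknowledge large $|t|$ only as a ``fringe regime'' for $\delta\lesssim N^{-1/3}/\sqrt{\log N}$, to be absorbed by an unspecified ``direct density bound'' under $SU(N)$ --- but such a density bound is precisely equivalent to the lemma being proved, so this deferral is circular; small $\delta$ is the \emph{hard} case, not a fringe one. Separately, the claimed uniform decay $|A_k/A_0|\leq C((1+|t|)/N)^{k^2}$ is not correct as stated: writing $A_k/A_0=\prod_{j=0}^{k-1}\prod_{l=-j}^{j}(it/2+l)\cdot G(N+1+it/2)^2/[G(N+1-k+it/2)G(N+1+k+it/2)]$ via $G(z+1)=\Gamma(z)G(z)$, the polynomial factor is of size up to $((|t|+2k)/2)^{k^2}$, so the base of your geometric-in-$k^2$ bound must grow with $k$ and the estimate becomes vacuous for $k\gtrsim N$. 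Both defects are repairable --- for moderate and large $|t|$ and for large $k$ one should fall back on the BHNY product bounds for the two-dimensional characteristic function at frequency $(s,t)=(2k,t)$, exactly as in the paper's Sublemmas, and resum over the lattice $s\in 2\Zz$ instead of integrating in $s$ --- but that analysis is the actual content of the proof and is missing from your plan.
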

\begin{proof}
 The proof of Lemma \ref{boundconcentration} needs several steps.

  \begin{sublemma}
For $j \geq 1$ integer, $s, t \in \mathbb{R}$, let us define
$$Q(j,s,t) := \frac{\left(j + \frac{it -s}{2}\right) \left(j + \frac{it+s}{2} \right)}{j(j+it)}.$$
Then,
\begin{enumerate}
\item For $s^2 + t^2 \geq 8 j^2$, $|Q(j,s,t)| \geq \max\left(1, \frac{\sqrt{s^2+t^2}}{8j} \right)$.
\item For $j^2 \leq s^2 + t^2 \leq 8j^2$, $|Q(j,s,t)| \leq 1$.
\item For $ s^2 + t^2 \leq j^2$, $|Q(j,s,t)| \leq e^{-(s^2+t^2)/10j^2}$.
\end{enumerate}
\end{sublemma}

% ==

\begin{proof}
One has:
\begin{equation}
Q(j,s,t) = \frac{1 - \frac{s^2+t^2}{4j^2} + it/j}{ 1 + it/j}. \label{Q1}
\end{equation}
If $s^2 + t^2 \leq 8j^2$, it is immediate that the numerator has a smaller
absolute value than the denominator, i.e.  $|Q(j,s,t)| \leq 1$.
Moreover,
$$ |Q(j,s,t)|^2  = \frac{ 1 - \frac{s^2+t^2}{2j^2} + \frac{(s^2+t^2)^2}{16j^4} + \frac{t^2}{j^2}} { 1 + \frac{t^2}{j^2}}
 = 1 - \frac{\left(\frac{s^2 + t^2}{2j^2}\right) \left( 1 - \frac{s^2 + t^2}{8j^2} \right) }{ 1 + \frac{t^2}{j^2}} $$
and in the case where $ s^2 + t^2 \leq j^2$, one deduces
$$ |Q(j,s,t)|^2  \leq 1 - \frac{7(s^2+ t^2)}{32 j^2} $$
and then
$$|Q(j,s,t)| \leq e^{-7(s^2+t^2)/64j^2} \leq  e^{-(s^2+t^2)/10j^2}.$$
Now, if $s^2 + t^2 \geq 8j^2$, the numerator in \eqref{Q1} has a larger absolute value than the denominator, and
then $|Q(j,s,t)| \geq 1$. Moreover, since $(s^2 + t^2)/8j^2 \geq 1$,
\begin{align*}
|Q(j,s,t)|^2 &  = \frac{ \left(\frac{s^2+t^2}{4j^2} - 1\right)^2 + \frac{t^2}{j^2}} { 1 + \frac{t^2}{j^2}}
\geq \frac{ \left(\frac{s^2+t^2}{8j^2} \right)^2 + \frac{t^2}{j^2}} { 1 + \frac{t^2}{j^2}}
 \geq  \frac{ \left(\frac{s^2+t^2}{8j^2} \right)^2 + \frac{s^2 + t^2}{j^2}} { 1 + \frac{s^2 + t^2}{j^2}}
 \\ & \geq \frac{1}{64} \, . \frac{ \left(\frac{s^2+t^2}{j^2} \right)^2 + \frac{s^2 + t^2}{ j^2}} { 1 + \frac{s^2 + t^2}{j^2}}
 = \frac{s^2 + t^2}{64j^2},
\end{align*}
which finishes the proof of the sublemma.
\end{proof}
\begin{sublemma}
Let $j \geq 1$ be an integer, let $\rho_j$ and $\sigma_j$ be the real and imaginary parts of $\log (1 - \sqrt{\beta_{1,j-1}} e^{i \theta})$,
where $\beta_{1,j-1}$ is a beta random variable with
$\beta(1,j-1)$ distribution and $\theta$ is independent of $\beta_{1,j-1}$, uniform on $[0,2 \pi]$.
Then, for $s, t \in \mathbb{R}$,
$$|\mathbb{E} [e^{i (t \rho_j + s \sigma_j)} ]| \leq e^{-(s^2 + t^2)/30j}$$
if $s^2 + t^2 \leq 8j^2$,
and
$$|\mathbb{E} [e^{i (t \rho_j + s \sigma_j)} ]| \leq \frac{8}{\sqrt{s^2+ t^2}}$$
if $s^2 + t^2 \geq 8j^2$ and $j \geq 2$.
\end{sublemma}
\begin{proof}
For $t \in \mathbb{R}$ and $s \in \mathbb{C}$ with
real part strictly between $-1$ and $1$,
\begin{equation}
\mathbb{E} [e^{i (t \rho_j + s \sigma_j)} ]  = \frac{\Gamma(j) \Gamma(j+it)}{\Gamma \left(j + \frac{it -s}{2}\right)\Gamma \left(j + \frac{it +s}{2}\right)} \label{a}
\end{equation}
(see \cite{BHNY}).
Now, if $t$ is fixed, the function
$$s \mapsto \mathbb{E} [e^{i (t \rho_j + s \sigma_j)} ]$$
is holomorphic, since the imaginary part is uniformly bounded (by $\pi/2$), which implies that \eqref{a} holds for all $t \in \mathbb{R}$, $s \in \mathbb{C}$, and in
particular for all $s,t \in \mathbb{R}$.
Moreover,
$$ \frac{\Gamma( k) \Gamma(k + it)}{\Gamma \left(k + \frac{it -s}{2}\right)\Gamma \left(k + \frac{it +s}{2}\right)}
\underset{k \rightarrow \infty}{\longrightarrow} 1,$$
since $\Gamma(k + z)/ \Gamma(k)$ is equivalent to $k^z$ for all $z \in \mathbb{C}$. 
Hence, by using the equation $\Gamma(z+ 1) = z \Gamma(z)$, one deduces:
$$\mathbb{E} [e^{i (t \rho_j + s \sigma_j)} ]  = \prod_{k = j}^{\infty} \frac{\left(k+ \frac{it -s}{2}\right)\left(k + \frac{it +s}{2}\right)}{ k( k +it)}
= \prod_{k = j}^{\infty} Q(k,s,t).$$
If $s^2 + t^2 \leq 8j^2$, then $|Q(k,s,t)| \leq 1$ for all $k \geq j$ and
$|Q(k,s,t)| \leq e^{-(s^2+t^2)/10k^2}$ for all $k \geq 3j$. Hence
$$|\mathbb{E} [e^{i (t \rho_j + s \sigma_j)} ] | \leq \prod_{k=3j}^{\infty} e^{-(s^2+t^2)/10k^2} \leq \prod_{k=3j}^{\infty} e^{-(s^2+t^2)/10k(k+1)} =
e^{-(s^2 + t^2)/30j}.$$
\noindent
Now let us assume $s^2 + t^2 \geq 8j^2$. One has:
$$ \mathbb{E} [e^{i (t \rho_j + s \sigma_j)} ] =  \frac{\Gamma(1) \Gamma(1+it)}{\Gamma \left(1 + \frac{it -s}{2}\right)\Gamma \left(1 + \frac{it +s}{2}\right)}
\, \prod_{k = 1}^{j-1} \frac{1}{ Q(k,s,t)}$$
where all the factors $\frac{1}{Q(k,s,t)}$ have absolute value bounded by one. By considering the case where $j=1$, one
deduces
$$  \left| \frac{\Gamma(1) \Gamma(1+it)}{\Gamma \left(1 + \frac{it -s}{2}\right)\Gamma \left(1 + \frac{it +s}{2}\right)} \right| \leq 1,$$
and then, for $j \geq 2$,
$$|\mathbb{E} [e^{i (t \rho_j + s \sigma_j)} ] | \leq \frac{1}{|Q(1,s,t)|} \leq \frac{8}{\sqrt{s^2+t^2}}.$$
\end{proof}
\begin{sublemma} \label{density}
For $N \geq 4$ and $\theta \in [0,2 \pi)$, the distribution of $\log (Z_X(\theta))$ under Haar measure on $U(N)$ has a density with respect to
Lebesgue measure on $\mathbb{C}$, which is continuous and bounded by $C_0/\log(N)$, where $C_0 > 0$ is a universal constant.
\end{sublemma}
\begin{proof}
By the results in \cite{BHNY} and the previous sublemma, one checks that the characteristic function $\Phi$ of $\log (Z_X(\theta)) \in \mathbb{C} \sim \mathbb{R}^2$  is given
by $$\Phi (s,t) = \prod_{j=1}^N \mathbb{E} [e^{i (t \rho_j + s \sigma_j)} ].$$
If $s^2 + t^2 \geq 32N$, one has $s^2 + t^2 \geq 128 \geq 8j^2$ for $j \in \{2,3,4\}$. Hence,
$$|\Phi(s,t)| \leq |\mathbb{E} [e^{i (t \rho_2+ s \sigma_2)} ]| |\mathbb{E} [e^{i (t \rho_3 + s \sigma_3)} ]||\mathbb{E} [e^{i (t \rho_4 + s \sigma_4)} ]|
\leq \frac{512}{(s^2+t^2)^{3/2}}.$$
If $s^2 + t^2 \leq 32N$, then $s^2 + t^2 \leq 8j^2$ for all $j \geq 2 \sqrt{N}$. Hence,
$$|\Phi(s,t)| \leq \prod_{2 \sqrt N \leq j \leq N}   \mathbb{E} [e^{i (t \rho_j + s \sigma_j)} ]
\leq \exp \left( - (s^2 + t^2) \sum_{2 \sqrt{N} \leq j \leq N} \frac{1}{30j} \right).$$
Since $e^{1/j} \geq \frac{j+1}{j}$, one deduces
\begin{align*}
|\Phi(s,t)| &   \leq \prod_{2 \sqrt{N} \leq j \leq N} \left(\frac{j}{j+1} \right)^{(s^2+t^2)/30} \leq \left(\frac{2\sqrt{N}+1}{N+1}\right)^{(s^2+t^2)/30}
\\ & \leq \left(\frac{3\sqrt{N}}{N}\right)^{(s^2+t^2)/30}  = e^{- \log(N/9)(s^2 + t^2)/60}.
\end{align*}
Now, for $N \geq 10$,
\begin{align*}
\int_{\mathbb{R}^2}  |\Phi(s,t)| ds dt & \leq \int_{\mathbb{R}^2}  \frac{512}{(s^2+t^2)^{3/2}} \, \Unens{s^2+t^2 \geq 32N} ds dt +
 \int_{\mathbb{R}^2} e^{- \log(N/9)(s^2 + t^2)/60} \, \Unens{s^2+t^2 \leq 32N} ds dt 
 \\ & = \pi \left( \int_{0}^{32N} e^{- u \log(N/9) / 60} du + \int_{32N}^{\infty} \frac{512}{u^{-3/2}} du \right)
 \\ & \leq \frac{60 \pi }{\log (N/9)} + 1024 \pi (32N)^{-1/2} \leq \frac{10000}{\log N},
 \end{align*}
\noindent
 and for $N \in \{4,5,6,7,8,9\}$,
 \begin{align*}
 \int_{\mathbb{R}^2}  |\Phi(s,t)| ds dt & \leq \int_{\mathbb{R}^2}  \frac{512}{(s^2+t^2)^{3/2}} \, \Unens{s^2+t^2 \geq 32N} ds dt +
 \int_{\mathbb{R}^2} \, \Unens{s^2+t^2 \leq 32N} ds dt
 \\ & = \pi \left( \int_{0}^{32N} du + \int_{32N}^{\infty} \frac{512}{u^{-3/2}} du \right)
 \\ & \leq  32 \pi N + 1024 \pi (32N)^{-1/2} \leq 288 \pi + 1024 \pi (128)^{-1/2} \leq \frac{10000}{\log 9}.
 \end{align*}
By applying Fourier inversion, we obtain Sublemma \ref{density}.
\end{proof}

Let us now go back to the proof of Lemma \ref{boundconcentration}. For any $X \in U(N)$ with eigenvalues $(e^{i\theta_j})_{1 \leq j \leq N}$, one has, in
the case where $e^{i \theta} \neq e^{i \theta_j}$ for all
$j \in \{1, \dots, N\}$, and modulo $\pi$,
\begin{align*}
\mathcal{I}:= \Im ( \log (Z_X(\theta)) &
=  \sum_{1 \leq j \leq N} \Im (\log (1- e^{i(\theta_j-\theta)}))
\\ & = \frac{1}{2} \sum_{1 \leq j \leq N} (\theta_j -\theta) +  \sum_{1 \leq j \leq N} \Im (\log (e^{-i(\theta_j-\theta)/2} - e^{i(\theta_j-\theta)/2}))
\\ & = \frac{1}{2} \Im(\log \, \det (X)) - \frac{N \theta}{2} + \sum_{1 \leq j \leq N} \Im \left(\log\left(-2 i \sin \left(\theta_j - \theta)/2 \right)\right) \right)
\\  & = \frac{\mathcal{J}}{2} - \frac{N (\theta + \pi)}{2},
\end{align*}
where $\mathcal{J}$ denotes the version of $\Im(\log \, \det (X))$ lying on the interval $(-\pi, \pi]$. Hence, for any $\epsilon \in (0, \pi)$,
$|\mathcal{J}| \leq \epsilon$ if and only if $\mathcal{I}$ is on an interval of the form $\left[\frac{2k \pi - \epsilon - N (\theta + \pi)}{2}, \frac{2k \pi + \epsilon - N (\theta + \pi)}{2} \right]$ for
some $k \in \mathbb{Z}$. Now, for some $A > 0$ chosen later in function of $\delta$, let $\Phi$ be a continuous function from $\mathbb{C}$ to $[0,1]$ such that $\Phi(z) = 1$ if $|\Re z - x_0|
 \leq \delta  \sqrt{\log N}$ and  $|\Im z| \leq A \sqrt{\log N}$, and such that $\Phi(z) = 0$ for $|\Re z - x_0|
 \geq 2 \delta  \sqrt{\log N}$ or  $|\Im z| \geq 2 A \sqrt{\log N}$.
 For $\epsilon \in (0, \pi)$, and under the Haar measure $\mathbb{P}_{U(N)}$ on $U(N)$,
\begin{multline*}
\frac{\pi}{\epsilon} \mathbb{E}_{U(N)} \left[\Phi( \log (Z_X(\theta))) \Unens{|\mathcal{J}| \leq \epsilon}\right]\\
 = \frac{\pi}{\epsilon} \sum_{k \in \mathbb{Z}} \mathbb{E}_{U(N)} \left[ \Phi( \log (Z_X(\theta))) \Unens{ \frac{2k \pi - \epsilon - N (\theta + \pi)}{2} \leq
\mathcal{I} \leq \frac{2k \pi + \epsilon - N (\theta + \pi)}{2}} \right] \\
 = \frac{\pi}{\epsilon} \sum_{k \in \mathbb{Z}}
\int_{-\infty}^{\infty} dx  \int_{(2k \pi - \epsilon - N (\theta + \pi))/2}^{(2k \pi + \epsilon - N (\theta + \pi))/2}  dy \, D(x + iy) \Phi(x + iy)
\end{multline*}
$$ =  \pi  \sum_{k \in \mathbb{Z}} \int_{-\infty}^{\infty} dx \int_{-1/2}^{1/2} du \, D(x + i[ k \pi -  N (\theta + \pi))/2 + u \epsilon]) \Phi(x + i[ k \pi -  N (\theta + \pi))/2 + u \epsilon]),
$$
where $D$ denotes the density of the law of $\log (Z_X(\theta))$, with respect to the Lebesgue measure. Now,
$$ D(x + i[ k \pi -  N (\theta + \pi))/2 + u \epsilon]) \Phi(x + i[ k \pi -  N (\theta + \pi))/2 + u \epsilon])$$ is uniformly bounded by the overall maximum of $D$ and
vanishes as soon as $|x - x_0| \geq 2 \delta \sqrt{\log N}$ or $|k| \pi \geq N (|\theta| + \pi)/2 + \pi/2 +  2 A \sqrt{\log N}$. Since $D$ and $\Phi$ are continuous functions, one can apply dominated convergence and deduce that
$$\frac{\pi}{\epsilon} \mathbb{E}_{U(N)} \left[\Phi( \log (Z_X(\theta))) \Unens{|\mathcal{J}| \leq \epsilon}\right]$$
converges to
$$ \pi  \sum_{k \in \mathbb{Z}} \int_{-\infty}^{\infty} \, D(x + i[ k \pi -  N (\theta + \pi))/2]) \Phi(x + i[ k \pi -  N (\theta + \pi))/2]) dx$$
when $\epsilon$ goes to zero. On the other hand, if the matrix $X$ follows $\Pp_{SU(N)}$ and if $T$ is an independent uniform variable on
$(-\pi, \pi]$, then $X e^{i T/N}$ follows $\Pp_{U(N)}$ and its determinant is $e^{i T}$. One deduces:
\begin{align*}
\frac{\pi}{\epsilon} \mathbb{E}_{U(N)} \left[\Phi( \log (Z_X(\theta))) \Unens{|\mathcal{J}| \leq \epsilon}\right]
& = \frac{\pi}{\epsilon} \mathbb{E}_{SU(N)} \left[\Phi( \log (Z_{Xe^{iT/N}}(\theta))) \Unens{|T| \leq \epsilon}\right]
\\ & = \frac{1}{2 \epsilon} \int_{- \epsilon}^{\epsilon} \mathbb{E}_{SU(N)} \left[\Phi( \log (Z_{Xe^{it/N}}(\theta)))\right] \, dt
\\ & = \int_{-1/2}^{1/2}  \mathbb{E}_{SU(N)} \left[\Phi( \log (Z_{Xe^{2i v \epsilon/N}}(\theta)))\right] \, dv
\end{align*}
\noindent
Now, the function $X \mapsto \Phi(\log(Z_X(\theta)))$ is continuous from $U(N)$ to $[0,1]$, since $\Phi$ is continuous with compact support and $X \mapsto \log(Z_X(\theta)))$ has
 discontinuities only at points where its real part goes to $-\infty$. One can then apply dominated convergence and obtain:
 $$\frac{\pi}{\epsilon} \mathbb{E}_{U(N)} \left[\Phi( \log (Z_X(\theta))) \Unens{|\mathcal{J}| \leq \epsilon}\right]
\underset{\epsilon \rightarrow 0}{\longrightarrow}  \mathbb{E}_{SU(N)} \left[\Phi( \log (Z_{X}(\theta)))\right].$$
By comparing to the convergence obtained just above, one deduces
 $$\mathbb{E}_{SU(N)} \left[\Phi( \log (Z_{X}(\theta)))\right] =
 \pi  \sum_{k \in \mathbb{Z}} \int_{-\infty}^{\infty} \, D(x + i[ k \pi -  N (\theta + \pi))/2]) \Phi(x + i[ k \pi -  N (\theta + \pi))/2]) dx.$$
 Since $D(z) \leq C_0/\log N$ and $$\Unens{|x-x_0| \leq \delta \sqrt{\log N}, |y| \leq A \sqrt{\log N}} \leq \Phi(x + iy) \leq
\Unens{|x-x_0| \leq 2 \delta  \sqrt{\log N}, |y| \leq 2 A \sqrt{\log N}}$$ for all $x, y \in \mathbb{R}$, one deduces
$$\mathbb{P}_{SU(N)} [|\log |Z_X(\theta)| - x_0| \leq \delta \sqrt{\log N}, |\Im \log Z_X(\theta)| \leq A \sqrt{\log N}] \leq \frac{\pi d L C_0}{\log N},$$
where $ d = 4 \delta \sqrt{\log N} $ is the length of the interval
$[x_0 - 2 \delta \sqrt{\log N}, x_0 + 2 \delta \sqrt{\log N}]$
and $L$ is the number of integers $k$ such that $|k \pi -  N (\theta + \pi))/2| \leq 2 A \sqrt{\log N}$.
Now, it is easy to check that $L \leq 1+ \frac{4 A \sqrt{\log N}}{\pi} $, and
then
$$\mathbb{P}_{SU(N)} \left[|\log |Z_X(\theta)| - x_0| \leq \delta \sqrt{\log N}, |\Im \log Z_X(\theta)|  \leq A \sqrt{\log N} \right] \leq 16 \, C_0 \,  A \delta + \frac{4 \pi \delta \,  C_0}{\sqrt{\log N}}.$$
Using Lemma \ref{estimateimaginarypart}, one obtains
$$\mathbb{P}_{SU(N)} \left[|\log |Z_X(\theta)| - x_0| \leq \delta \sqrt{\log N} \right] \leq 16 \, C_0 \,  A \delta + \frac{4 \pi \delta \,  C_0}{\sqrt{\log N}} +
c'_1 e^{- \frac{A}{2} \left(A \wedge \frac{\sqrt{\log N}}{2} \right) }.$$
Let us now choose $A := 1 + 5 \log(1/\delta)$.
One gets
$$A \wedge \frac{\sqrt{\log N}}{2}  = [1 + 5 \log(1/\delta)] \wedge \frac{\sqrt{\log N}}{2} \geq \frac{\sqrt{\log 2}}{2}$$
and then
$$ \frac{A}{2} \left(A \wedge \frac{\sqrt{\log N}}{2} \right)  \geq \frac{5 \sqrt{\log 2} \log(1/\delta)}{4} \geq \log(1/\delta).$$
Therefore,
$$ \mathbb{P}_{SU(N)} \left[|\log |Z_X(\theta)| - x_0| \leq \delta \sqrt{\log N} \right] \leq 16 \, C_0 \,   \delta + 80 \, C_0 \, \delta \log(1/\delta)
 + \frac{4 \pi \delta \,  C_0}{\sqrt{\log N}} + c'_1 \delta.$$
Since $\delta < 1/2$, one has $\delta \leq  \delta \log(1/\delta) / \log (2)$, which implies Lemma \ref{boundconcentration}, for
$$ C = \frac{16 \, C_0 }{\log 2} + 80 \, C_0 +  \frac{4 \pi  \,  C_0}{(\log 2)^{3/2}} + \frac{c'_1}{\log 2}.$$

\end{proof}

$ ~~ $
$ ~~ $
$ ~~ $
$ ~~ $

%The gaussian estimation of $\Prob{ (G_1, \dots , G_n) \in \Ds_{\delta, n}^c }$ is identical to Bombieri and Hejhal (\cf \cite{BH}) : considering the following set $ \widetilde{\Es}_{i,j} $ :
%
%
%\vspace{-0.3cm}
%\begin{eqnarray*}
%\mbox{idem Bombieri-Hejhal \cite{BH}}
%\end{eqnarray*}

% ==
\subsection{Behaviour of the oscillation in short intervals of the log-characteristic polynomial}

\begin{lemma}
\label{L7BH}  There exists $c_2 > 0 $ such that  for $\mu\in\Rr$ and $A\geq0$ and  uniformly in $N \geq M \geq 2 \vee \frac{|\mu|}{2 \pi}$,

\vspace{-0.3cm}
\begin{eqnarray*}
\Proba{SU(N)}{  \intpi{ \biggabs{ \Re\log Z_X \prth{\theta + \frac{\mu}{N}  }  -  \Re\log Z_X(\theta ) }  } \geq A \sqrt{\log M} } \leq \frac{c_2}{ A^2 },
\end{eqnarray*}

\vspace{-0.3cm}
\begin{eqnarray*}
\Proba{SU(N)}{  \intpi{ \biggabs{ \Im\log Z_X \prth{\theta + \frac{\mu}{N}  }  -  \Im\log Z_X(\theta ) }  } \geq A \sqrt{\log M} } \leq \frac{c_2}{ A^2 }.
\end{eqnarray*}

\end{lemma}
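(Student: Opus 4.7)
The plan is to apply a second moment method combined with an explicit variance computation. First, by Chebyshev's inequality with exponent $2$, followed by the Cauchy--Schwarz inequality applied to the normalized measure $\frac{d\theta}{2\pi}$, the lemma will reduce (both for the real and the imaginary part) to showing the existence of a universal constant $c>0$ such that
$$
\mathbb{E}_{SU(N)}\biggl[\int_0^{2\pi}\bigl(\Re \log Z_X(\theta + \mu/N) - \Re \log Z_X(\theta)\bigr)^2 \frac{d\theta}{2\pi}\biggr] \leq c \log M,
$$
together with the analogous bound with $\Im$ in place of $\Re$.

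To handle the $SU(N)$ expectation, I would observe that the functional $G(X)$ defined by the inner integral is invariant under $X \mapsto e^{i\alpha}X$: indeed, $Z_{e^{i\alpha}X}(\theta) = Z_X(\theta-\alpha)$, and the induced shift in $\theta$ leaves the $2\pi$-periodic average unchanged. Proposition \ref{Disintegration21} then yields $\mathbb{E}_{SU(N)}[G] = \mathbb{E}_{U(N)}[G]$. Using Fubini and the rotational invariance of $\Pp_{U(N)}$, the right-hand side simplifies to
$$
\mathbb{E}_{U(N)}\bigl[(\Re \log Z_X(\mu/N) - \Re \log Z_X(0))^2\bigr],
$$
so the task reduces to a two-point variance estimate under Haar measure on $U(N)$.

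For the real part I would use the absolutely convergent expansion $\log\det(I - zX) = -\sum_{k \geq 1} z^k \operatorname{Tr}(X^k)/k$, valid for $|z|<1$, applied at $z=r$ and $z=re^{-i\mu/N}$ with $r\in(0,1)$. Combining the Diaconis--Shahshahani moment identities $\mathbb{E}_{U(N)}[\operatorname{Tr}(X^k)\overline{\operatorname{Tr}(X^l)}] = \delta_{kl}\min(k,N)$ with the vanishing $\mathbb{E}_{U(N)}[\operatorname{Tr}(X^k)\operatorname{Tr}(X^l)] = 0$ (an immediate consequence of $X \mapsto e^{i\alpha}X$ invariance), the second moment of the $r$-regularized increment equals
$$
\frac{1}{2}\sum_{k\geq 1}\frac{r^{2k}\min(k,N)}{k^2}\bigl|e^{-ik\mu/N}-1\bigr|^2.
$$
Passing $r \to 1^-$ via Fatou's lemma would then bound the target second moment by the same sum at $r=1$; using $|e^{-ik\mu/N}-1|^2 \leq \min(k^2|\mu|^2/N^2,\,4)$ and splitting the range at $k \sim N/|\mu|$ and $k \sim N$ yields a total of $O(\log|\mu|) + O(1) = O(\log M)$, since $|\mu| \leq 2\pi M$.

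For the imaginary part the most efficient route goes through Proposition \ref{propositionImLogCardZeros}, which rewrites the increment $\Im \log Z_X(\theta + \mu/N) - \Im \log Z_X(\theta)$ as $\pi$ times the number of eigenangles in the arc $(\theta,\theta+\mu/N)$ minus its deterministic mean. The variance of this count admits an entirely analogous trace expansion and the same bound $O(\log M)$ follows. The main technical step will be justifying the $r \to 1^-$ limit: Fatou's lemma provides the one-sided inequality I need, but I must first verify that the left-hand side is finite in the limit, which will follow from the subgaussian integrability furnished by Lemmas \ref{logZX} and \ref{estimateimaginarypart}.
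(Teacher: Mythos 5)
Your proposal is correct, and the opening reductions are exactly the paper's: Chebyshev with exponent $2$, Jensen/Cauchy--Schwarz in $\theta$, disintegration (Proposition \ref{Disintegration21}) combined with rotation invariance to collapse the $\theta$-average to $\Ee_{U(N)}[R_0^2]$, and then the trace--Fourier expansion with the Diaconis--Shahshahani covariances \eqref{EspTrace}. Where you genuinely diverge is in the two computational steps, and both choices are defensible. First, to turn the formal expansion of $\log Z_X$ into an honest second-moment identity, the paper truncates to $S^{(K)}_t$ and proves $L^2$-convergence as $K\to\infty$, obtaining an exact covariance; you instead regularize radially and pass $r\to 1^-$ via Fatou, which is shorter since the lemma only requires a one-sided bound (and the finiteness worry you flag is automatic once the right-hand side is seen to be finite, so Lemmas \ref{logZX} and \ref{estimateimaginarypart} are not actually needed here). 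Second, and more substantively, once you reach $\sum_{k\geq 1}\frac{k\wedge N}{k^2}\bigl(1-\cos(k\mu/N)\bigr)$ (and note $\tfrac12|e^{-ik\mu/N}-1|^2 = 1-\cos(k\mu/N)$, so your expression is literally the paper's), the paper invokes a fairly heavy asymptotic identity \eqref{ApproxSum} for $\sum_k \frac{k\wedge N}{k^2}\cos(k\theta)$ in terms of $\mathrm{Si}$ and $\mathrm{Ci}$ and analyses the residual function $f(\mu)$; your elementary bound $1-\cos x \leq \min(x^2/2,\,2)$ with range-splitting at $k\approx N/|\mu|$ and $k\approx N$ reaches the same $O(\log M)$ conclusion with far less machinery, which I would regard as an improvement. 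Finally, for the imaginary part you propose rerouting through Proposition \ref{propositionImLogCardZeros} and the eigenvalue counting function; this is valid but buys nothing, since the counting function requires the same Fourier/regularization treatment, whereas (as the paper observes) the identical trace computation gives the same covariance for $\Im\log Z_X$ directly.
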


\begin{proof}

By symmetry of the problem, we can assume $\mu > 0$. Setting
 \[
 R_\theta := \Re\log Z_X \prth{\theta + \frac{\mu}{N}  }  -  \Re\log Z_X(\theta )
 \]
 for fixed $ \mu $ (or the same expression with the imaginary part), we get:

\vspace{-0.3cm}
\begin{eqnarray*}
\Proba{SU(N)}{  \intpi{ \abs{ R_\theta }  } \geq A \sqrt{\log M} } & \leq & \frac{1}{ A^2 \log M } \Espr{SU(N)}{ \prth{\intpi{ \abs{ R_\theta }  } }^2 } \\
				& \leq & \frac{1}{ A^2 \log M } \Espr{SU(N)}{ \intpi{  R_\theta^2  } } \\
				& = & \frac{1}{ A^2 \log M } \intpi{ \Espr{SU(N)}{   R_\theta^2  } } \\
				& = & \frac{1}{ A^2 \log M }  \Espr{U(N)}{   R_0^2  }  ~ \mbox{(by \eqref{Disintegration})}
\end{eqnarray*}

Now, under $U(N)$, the canonical matrix $X$ is almost surely unitary: let $\theta_1, \dots, \theta_N$ be its eigenangles in $[0, 2\pi)$. For
$j \in \{1, \dots, N\}$ and $t \in [0, 2 \pi) \backslash \{\theta_j\}$, we can expand the logarithm:
$$\log( 1 -  e^{i(\theta_j-t)}) = - \sum_{k \geq 1} \frac{e^{ik(\theta_j - t)}}{k}$$
as a semi-convergent series. Hence, for $t$ such that $Z_X(t) \neq 0$,
$$\log Z_X(t) = - \sum_{j=1}^{N} \sum_{k \geq 1} \frac{e^{ik(\theta_j - t)}}{k} = - \sum_{k \geq 1} \frac{e^{-ikt}}{k} \tr{X^k}.$$

Thus:

\vspace{-0.3cm}
\begin{eqnarray*} %\frac{1}{2} \prth{ \log Z_X(\alpha) +  \overline{\log Z_X(\alpha)} }
\Re\log Z_X(t) \!\!\! &=& \!\!\! -\frac{1}{2} \prth{  \sum_{k \geq 1} \frac{1}{k} e^{-i k t} \tr{X^k} +  \sum_{k \geq 1} \frac{1}{k} e^{i k t} \tr{X^{-k}} } = -\frac{1}{2}   \sum_{k \in \Zz^*} \frac{1}{\abs{k}} e^{-i k t} \tr{X^k} \\
\Im\log Z_X(t) \!\!\! &=& \!\!\! -\frac{1}{2i} \prth{  \sum_{k \geq 1} \frac{1}{k} e^{-i k t} \tr{X^k} -  \sum_{k \geq 1} \frac{1}{k} e^{i k t} \tr{X^{-k}} } = - \frac{1}{2i}   \sum_{k \in \Zz^*} \frac{1}{k} e^{-i k t} \tr{X^k}.
\end{eqnarray*}
Here, the series in $k \in  \Zz^*$ are semi-convergent: more precisely, setting for $K \geq 1$,
$$S_t^{(K)} := -\frac{1}{2}   \sum_{k \in \Zz^*, |k| \leq K} \frac{1}{\abs{k}} e^{-i k t} \tr{X^k},$$
and $$S_t := \Re\log Z_X(t),$$
$S_t^{(K)}$ tends almost surely to $S_t$ when $K$ goes to infinity.

Moreover, one has the following classical result (\cite{DS}): for all $p, q \in \Zz$,
\vspace{-0.3cm}
\begin{eqnarray}\label{EspTrace}
\Espr{U(N)}{ \tr{X^p} \overline{ \tr{ X^q } } } = \Unens{p = q} \abs{p} \wedge N.
\end{eqnarray}

\noindent Hence, for $K, L \geq 1$, $t, u \in \Rr$,

\vspace{-0.3cm}
\begin{eqnarray*}
\Espr{U(N)}{ S^{(K)}_t S^{(L)}_u } &=& \Espr{U(N)}{ \frac{1}{4} \sum_{p,q \in \Zz^*, |p| \leq K, |q| \leq L} \frac{e^{-i(pt+qu) }}{\abs{pq}} \tr{X^p} \tr{X^q} } \\
					 &=& \frac{1}{4} \sum_{p,q \in \Zz^*, |p| \leq K, |q| \leq L}
\frac{e^{ -i(pt + qu)}}{\abs{pq}} \Espr{U(N)}{ \tr{X^p} \tr{X^q} }  \\
					 &=& \frac{1}{4} \sum_{p,q \in \Zz^*, |p| \leq K, |q| \leq L } \frac{e^{-i(pt + qu) }}{\abs{pq}} \Unens{p = -q} \abs{q} \wedge N ~\mbox{(from \eqref{EspTrace} )} \\
					 &=& \frac{1}{4} \sum_{k \in \Zz^*, |k| \leq K \wedge L} \frac{e^{ik(u-t) }}{k^2} \abs{k} \wedge N \\
& = & \frac{1}{2} \sum_{1 \leq k \leq
K \wedge L} \frac{ k \wedge N}{k^2} \prth{\frac{e^{ik (u-t)} + e^{-ik(u-t)}}{2}} \\
					 &=& \frac{1}{2} \sum_{1 \leq k \leq K \wedge L } \frac{ k \wedge N}{k^2} \cos\prth{k (u-t)}.
\end{eqnarray*}
One deduces that
\begin{eqnarray*}
\Espr{U(N)}{ (S^{(K)}_t - S^{(L)}_t)^2 }  &=& \Espr{U(N)}{ (S^{(K)}_t)^2   } + \Espr{U(N)}{ (S^{(L)}_t)^2 } - 2 \, \Espr{U(N)}{ S^{(K)}_t S^{(L)}_t }    \\
					 &=& \frac{1}{2} \sum_{k \geq 1} \frac{ k \wedge N}{k^2} \cos\prth{k (u-t)} \left(\Unens{k \leq K} + \Unens{k \leq L} - 2 \Unens{k \leq K \wedge L} \right) \\
					 &=& \frac{1}{2} \sum_{k \geq 1} \frac{ k \wedge N}{k^2} \cos\prth{k (u-t)} \, \Unens{K \wedge L <  k \leq K \vee L}  \\
					 &\leq& \frac{1}{2} \sum_{k \geq K \wedge L} \frac{ k \wedge N}{k^2},
\end{eqnarray*}
which tends to zero when $K \wedge L$ goes to infinity. Hence, $S^{(K)}_t$ converges in $L^2$ when $K$ goes to infinity, and the limit is necessarily $S_t$.
Therefore,
$$\Espr{U(N)}{ S_t S_u } = \lim_{K \rightarrow \infty} \frac{1}{2} \sum_{1 \leq k \leq K \wedge L } \frac{ k \wedge N}{k^2} \cos\prth{k (u-t)}
= \frac{1}{2} \sum_{k \geq 1} \frac{ k \wedge N}{k^2} \cos\prth{k (u-t)}.$$

The same computation with $ \widetilde{S}_t := \Im\log Z_X( t ) $ gives exactly the same equality:

\vspace{-0.3cm}
\begin{eqnarray*}
\Espr{U(N)}{ \widetilde{S}_t \widetilde{S}_u } = \frac{1}{2} \sum_{k \geq 1} \frac{ k \wedge N}{k^2} \cos\prth{k (u-t)}
\end{eqnarray*}

It is therefore enough to achieve the computations only with $ S_t $. Using this last formula, we can write, with $\alpha = \frac{\mu}{N}$:

\vspace{-0.3cm}
\begin{eqnarray*}
\Espr{U(N)}{ R_0^2 } &=& \Espr{U(N)}{ \prth{S_\alpha - S_0}^2 } = 2 \Espr{U}{S_0^2 - S_\alpha S_0} \\
			  &=& \sum_{k \geq 1} \frac{ k \wedge N}{ k^2} \prth{1 - \cos\prth{k \alpha}} \end{eqnarray*}

We can then develop $ \Espr{U(N)}{ R_0^2 } $:

\vspace{-0.3cm}
\begin{eqnarray*}
\Espr{U(N)}{ R_0^2 } =  \sum_{k \geq 1} \frac{ k \wedge N}{ k^2} - \sum_{k \geq 1} \frac{ k \wedge N}{ k^2}  \cos\prth{ \frac{k\mu}{N}}
\end{eqnarray*}
But we also have
\vspace{-0.3cm}
\begin{eqnarray*}
\sum_{k \geq 1} \frac{ k \wedge N}{ k^2} = \sum_{k = 1}^N \frac{1}{k} + N \sum_{k > N} \frac{1}{k^2} = \log N + \gamma + O(1/N) + N \prth{ \frac{1}{N} + O\prth{ \frac{1}{N^2} } }
\end{eqnarray*}

Moreover we have the following result (\cite{H}, p.37), uniformly on $ \theta \in \crochet{-\pi, \pi} $:

% ==
\newcommand{\Ci}[1]{\text{Ci}\prth{#1}}
\newcommand{\Si}[1]{\text{Si}\prth{#1}}
% ==

\vspace{-0.3cm}
\begin{multline}\label{ApproxSum}
\sum_{k \geq 1} \! \frac{ k \wedge N}{ k^2} \! \cos (k \theta) = -\log \! \biggabs{ 2 \sin\prth{ \frac{\theta}{2} } \! } + \Ci{N \abs{\theta}} + \cos\prth{N \theta}
- \frac{\pi}{2} N \abs{\theta} + N \theta \, \Si{N \theta}  \\
+ O\prth{ \frac{1}{N} } \ \
\end{multline}

\noindent where:

\vspace{-0.3cm}
\begin{eqnarray*}
\Si{z} \! &:=& \ \ \int_0^z \ \frac{\sin x}{x} dx = \frac{\pi}{2} - \frac{\cos z}{z} + \int_z^{+ \infty} \frac{\cos x}{x^2} dx \\
\Ci{z} \! &:=& \!\!\!-\!\! \int_z^{+ \infty} \! \frac{\cos x}{x} dx = \gamma - \log z + \int_0^z \frac{\cos x - 1}{x} dx
\end{eqnarray*}

%\vspace{-0.3cm}
Recall also that
\begin{eqnarray*} %\hspace{-37.5ex} en 12
 \int_0^{+ \infty}  \frac{\sin x}{x} dx = \frac{\pi}{2}.
\end{eqnarray*}

Let us denote $ f\prth{\mu} := \log \mu  + \frac{\pi}{2} \mu - \cos \mu - \Ci{\mu} - \mu \Si{\mu} $. We have, for $N$ going to infinity:

\vspace{-0.3cm}
\begin{eqnarray*}
\Espr{U(N)}{R_0^2} &=& \log N + 1 + \gamma  + f(\mu) - \log \mu + \log\biggabs{ 2 \sin\prth{ \frac{\theta}{2} } } + O(1/N) ~\mbox{ (with \eqref{ApproxSum} )} \\
			&=& \log N + 1 + \gamma + f(\mu) - \log \mu + \log\prth{2 \frac{\mu}{2N}\prth{ 1 + O\prth{ \prth{ \frac{\mu}{N} }^2 } } } + O(1/N) \\
			&=& 1 + \gamma + f(\mu) + O\prth{ \prth{ \frac{\mu}{N} }^2 } + O(1/N).
\end{eqnarray*}

Let us now study the behavior of the function $f$.

\vspace{-0.3cm}
\begin{eqnarray*}
f(\mu) &=& \log \mu - \cos \mu + \mu \prth{ \frac{\pi}{2} - \Si{\mu} } - \Ci{\mu} \\
	   &=& \log \mu - \cos \mu + \mu \prth{ \frac{\cos \mu}{\mu} - \int_\mu^{+ \infty} \frac{\cos x}{x^2} dx  } - \prth{\gamma - \log \mu + \int_0^\mu \frac{\cos x - 1}{x} dx } \\
	   &=& - \gamma - \mu \int_\mu^{+ \infty} \frac{\cos x}{x^2} dx + \int_0^\mu \frac{\cos x - 1}{x} dx
\end{eqnarray*}

One has:

\vspace{-0.3cm}
\begin{eqnarray*}
f(\mu) &=& - \gamma - \mu O\prth{ \int_\mu^{+ \infty} \frac{1}{x^2} dx } + O\prth{ \int_0^\mu \left(\frac{1}{x} \wedge 1 \right) dx } \\
	   &=& - \gamma + O\prth{1} + O\prth{ 1 + \log (\mu \vee 1) } = O\prth{ \log \prth{\frac{\mu}{2\pi} \vee 2 } },
\end{eqnarray*}
which implies
 \begin{equation}
\Espr{U(N)}{R_0^2} = O\prth{ \log \prth{\frac{\mu}{2 \pi} \vee 2 } } =  O\prth{ \log M }. \label{RealBigO}
\end{equation}

\end{proof}

% ==
\subsection{Control in probability of the mean oscillation of the log-characteristic polynomials}

\def\GSUE{\mbox{GSUE}}

\begin{lemma} \label{L8BH}  For a certain $ n \in \Nn $, let us consider an i.i.d. sequence $ (U_j)_{1 \leq j \leq n} $ of random matrices
following the Haar measure on $ SU(N) $. Let us set:

\vspace{-0.3cm}
\begin{eqnarray*}
L_j(\theta) := \frac{\log\abs{Z_{U_j}(\theta)} }{\sqrt{ \frac{1}{2} \log N }}
\end{eqnarray*}

For $ \delta \in (0,1/2) $, let us consider the random set:

\vspace{-0.3cm}
\begin{eqnarray*}
~\Es_\delta        &:=& \bigcup_{i = 1}^n \prth{ \ensemble{ \theta \in \crochet{0, 2\pi} \ / \ \abs{L_i(\theta)} \geq \delta^{-1}  } \cup \bigcup_{j \neq i} \ensemble{ \theta \in \crochet{0, 2\pi} \ / \ \abs{L_j(\theta) - L_i(\theta)} \leq \delta  } } \\
% ~\Es_{\delta,0} \! &:=& \bigcup_{i = 1}^n \prth{ \ensemble{  \abs{L_i(0)} \geq \delta^{-1}  } \cup \bigcup_{j \neq i} \ensemble{ \abs{L_j(0) - L_i(0)} \leq \delta  } }
\end{eqnarray*}

Then, there exists $c_3 > 0$, depending only on $n$, such that for all $N \geq 4$:

\vspace{-0.3cm}
\begin{eqnarray*}
\Espr{}{ \lambda_{2\pi}(\Es_\delta) } \leq c_3 \delta \log(1 / \delta),
\end{eqnarray*}
where $ \lambda_{2\pi} $ denotes the normalised Lebesgue measure on $ \crochet{0, 2\pi} $.

\end{lemma}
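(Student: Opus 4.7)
The plan is to apply Fubini's theorem to write
\[
\Espr{}{ \lambda_{2\pi}(\Es_\delta) } = \intpi{ \Proba{}{ \theta \in \Es_\delta } },
\]
and then use the union bound to split the contribution of $\Es_\delta$ into two types of ``bad events'': the large-value events $\{|L_i(\theta)| \geq \delta^{-1}\}$ for $i \in \intcrochet{1,n}$, and the near-collision events $\{|L_j(\theta) - L_i(\theta)| \leq \delta\}$ for ordered pairs $i \neq j$. Since $n$ is fixed, it suffices to bound each of these at most $n + n(n-1)$ contributions by $O(\delta \log(1/\delta))$.

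For the large-value events, I would use Lemma \ref{logZX}: rewriting $|L_i(\theta)| \geq \delta^{-1}$ as $|\log|Z_{U_i}(\theta)|| \geq (1/\delta)\sqrt{(\log N)/2}$ and noting that $|\log|Z_X(\theta)|| \leq |\log Z_X(\theta)|$, an application of Lemma \ref{logZX} with $A = 1/(\delta\sqrt{2})$ yields, after integration over $\theta$,
\[
\intpi{ \Proba{SU(N)}{ |L_i(\theta)| \geq \delta^{-1} } } \leq c_1 \exp\!\left( -\tfrac{1}{2\sqrt{2}\,\delta} \left( \tfrac{1}{\sqrt{2}\,\delta} \wedge \tfrac{\sqrt{\log N}}{2} \right) \right),
\]
which decays faster than any power of $\delta$ (uniformly in $N \geq 4$ once $\delta$ is small, and otherwise is bounded by a constant times $\delta\log(1/\delta)$). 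So these events contribute negligibly to the desired bound.

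The near-collision events are handled by conditioning on one matrix and using the independence of the $U_j$'s together with the concentration estimate of Lemma \ref{boundconcentration}. Fix $i \neq j$ and $\theta$: conditionally on $U_j$, the quantity $x_0 := \log|Z_{U_j}(\theta)|$ is a constant, and the event $|L_j(\theta) - L_i(\theta)| \leq \delta$ is exactly $|\log|Z_{U_i}(\theta)| - x_0| \leq (\delta/\sqrt{2})\sqrt{\log N}$. Applying Lemma \ref{boundconcentration} to the independent matrix $U_i$ with parameter $\delta/\sqrt{2} \in (0,1/2)$ gives
\[
\Proba{}{ |L_j(\theta) - L_i(\theta)| \leq \delta \,\Big|\, U_j } \leq C\,\tfrac{\delta}{\sqrt{2}}\log\!\bigl(\sqrt{2}/\delta\bigr) \leq C'\,\delta \log(1/\delta),
\]
uniformly in $U_j$ and $\theta$. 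Taking expectation and integrating over $\theta$ gives the same bound for each of the $n(n-1)$ pairs.

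Summing the two contributions finishes the proof with a constant $c_3$ depending only on $n$. I expect the argument to be essentially routine once one has Lemmas \ref{logZX} and \ref{boundconcentration} in hand; the only subtlety worth double-checking is that Lemma \ref{logZX} only provides a bound averaged over $\theta$, but since we are itself integrating the probability against $\theta$ in the Fubini step, this averaged bound is exactly what is needed, so no stronger pointwise-in-$\theta$ tail estimate is required.
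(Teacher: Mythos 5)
Your proof is correct and follows essentially the same route as the paper: Fubini plus a union bound, Lemma \ref{logZX} with $A=\delta^{-1}/\sqrt{2}$ for the large-value events (where, as you note, the $\theta$-averaged form of that lemma is exactly what the Fubini step requires), and Lemma \ref{boundconcentration} with parameter $\delta/\sqrt{2}$ for the near-collision events, your conditioning on $U_j$ being just the explicit form of the paper's uniform-in-$x_0$ bound. No gaps.
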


\begin{proof}

Using Markov's inequality, one gets:
\begin{align*}
\Espr{SU(N)}{ \lambda_{2\pi}(\Es_\delta) }
&  = \int_0^{2 \pi} \frac{d \theta}{2 \pi} \Proba{SU(N)}{\theta \in \Es_\delta}
\\ &  \leq   \sum_{i=1}^n \int_0^{2 \pi} \frac{d \theta}{2 \pi} \Proba{SU(N)}{\abs{L_i(\theta)} \geq \delta^{-1}}   +  \sum_{1 \leq i \neq j \leq n}
 \int_0^{2 \pi} \frac{d \theta}{2 \pi} \Proba{SU(N)}{\abs{L_j(\theta) - L_i(\theta)} \leq \delta}
\\ & \leq n \int_0^{2 \pi} \frac{d \theta}{2 \pi} \Proba{SU(N)}{\abs{\log{|Z_X(\theta)|}} \geq \delta^{-1} \sqrt{\frac{1}{2} \log N}}
\\ & + n (n-1) \sup_{\theta \in [0,2 \pi], x \in \Rr} \Proba{SU(N)}{\abs{\log{|Z_X(\theta)|} - x} \leq \delta \sqrt{\frac{1}{2} \log N}}
\\ & \leq n c_1 e^{-\frac{\delta^{-1}}{\sqrt{2}} \left(\frac{\delta^{-1}}{\sqrt{2}} \wedge \frac{\sqrt{\log N}}{2} \right)} + n (n-1) C (\delta/\sqrt{2}) \log(\sqrt{2}/\delta)
\end{align*}
Now,
$$e^{-\frac{\delta^{-1}}{\sqrt{2}} \left(\frac{\delta^{-1}}{\sqrt{2}} \wedge \frac{\sqrt{\log N}}{2} \right)}
\leq e^{ -\frac{\delta^{-1}}{\sqrt{2}} \left(\frac{2}{\sqrt{2}} \wedge \frac{\sqrt{\log 2}}{2} \right)}
\leq e^{- \frac{\delta^{-1}}{5}} = O(\delta \log(1/\delta))$$
and
$$\delta \log(\sqrt{2}/\delta) \leq \delta \log(\sqrt{\delta^{-1}}/\delta) = \frac{3 \delta}{2}  \log(1/\delta),$$
which gives Lemma \ref{L8BH}.
\end{proof}

$ ~~ $
$ ~~ $
$ ~~ $

% ==
\subsection{Control in expectation of the oscillation of the log-characteristic polynomials on a small period}

 In the sequel, we consider the dimension $N \geq 4$, an integer $K$ such that $2 \leq K \leq N/2$, defined as
a function of $N$ which is equivalent to $N/(\log N)^{3/64}$ when $N$ goes to infinity. We denote
$$ M := N/K \geq 2,$$
which is equivalent to $(\log N)^{3/64}$, and we also define a parameter $\delta \in (0,1/4)$ as a function of $N$, equivalent to
$(\log N)^{-3/32}$ when $N$ goes to infinity.
For $\theta_0 \in [0, 2 \pi]$, we denote, for $0 \leq k \leq K$.
$$ \theta_k := \theta_0 + \frac{2 \pi k}{K} = \theta_0 + \frac{2 \pi k M}{N},$$
and for $0 \leq k \leq K-1$,
$$\Delta := \theta_{k+1} - \theta_k = \frac{2 \pi}{K} = \frac{ 2 \pi  M}{N}.$$
The angle $\theta_0$ is chosen in such a way that the following technical condition is satisfied:

$$\sum_{k = 0}^{K-1} \Ee_{SU(N)} \left( \left| \Im\log Z_{X}(\theta_k + (1- \sqrt{\delta})\Delta ) - \Im\log Z_{X}(\theta_{k}
+ \sqrt{\delta} \Delta) \right| \right) $$ $$\leq
K \Ee_{SU(N)} \left(  \int_0^{2 \pi} \frac{d \theta}{2 \pi}
\left| \Im\log Z_{X}(\theta + (1- \sqrt{\delta})\Delta ) - \Im\log Z_{X}(\theta+ \sqrt{\delta} \Delta) \right| \right).$$
This choice is always possible: indeed, if the converse (strict) inequality were true for all $\theta_0$, then one would get a contradiction by integrating with respect to $\theta_0 \in [0, 2\pi/K)$.
We then define the interval $J := [\theta_0, \theta_0 + 2 \pi) = [\theta_0, \theta_K)$. Note that all the objects introduced here can
be defined only as a  function of $N$. Moreover, by applying Lemma \ref{L7BH} to $\theta + \sqrt{\delta} \Delta$ and
$\mu = N (1- 2 \sqrt{\delta} ) \Delta \leq 2 \pi M$, we deduce that the assumption made on $\theta_0$ implies:
\begin{equation}
\sum_{k = 0}^{K-1} \Ee_{SU(N)} \left( \left| \Im\log Z_{X}(\theta_k + (1- \sqrt{\delta})\Delta ) - \Im\log Z_{X}(\theta_{k}
+ \sqrt{\delta} \Delta) \right| \right)  = O \prth{ K \sqrt{\log M} }. \label{controlImtheta0}
\end{equation}

We can then introduce the \textbf{2-oscillation} of the real and imaginary parts of the log-characteristic polynomial:

\begin{definition}

For $\theta \in J$ and $\mu \in [0,2 \pi M]$, and for the canonical matrix $X \in U(N)$, the $2$-oscillations of $ \Re\log Z_X $ and $ \Im\log Z_X $ are defined by

\vspace{-0.3cm}
\begin{eqnarray*}
\Delta_{\mu} R_{\theta} &:=& \frac{1}{\sqrt{\log(M)}} \Bigabs{ \Re\log Z_X \prth{\theta + \frac{\mu}{N}  }  -  \Re\log Z_X(\theta ) } \\
\Delta_{\mu} I_{\theta} &:=& \frac{1}{\sqrt{\log(M)}} \Bigabs{ \Im\log Z_X \prth{\theta + \frac{\mu}{N}  }  -  \Im\log Z_X(\theta ) }
\end{eqnarray*}

In case of several matrices $(X_j)_{1 \leq j \leq n} $, we denote the corresponding $2$-oscillations by $ \Delta_{\mu} R_{\theta}^{(j)} $ and $ \Delta_{\mu} I_{\theta}^{(j)} $.

\end{definition}

In the sequel, we need to introduce several random sets. The most important ones can be informally described as
follows:
\begin{enumerate}
\item A set $\Ns_1$ of indices $k$ such that the average of the 2-oscillations
$\Delta_{\mu} R_{\theta}$ and $\Delta_{\mu} I_{\theta}$ of the log characteristic 
polynomials for $\theta \in [\theta_k, \theta_{k+1}]$ and $\mu \in [0, 2\pi M]$ is sufficiently small.
\item For $k \in \Ns_1$, a subset $\Gs_k$ of $[\theta_k, \theta_{k+1}]$ for which the average of
 the 2-oscillations with respect to $\mu \in [0, 2\pi M]$ is small enough.
 \item A subset $\Ns_2$ of $\Ns_1$ of "good" indices, such that there exists $\theta \in
 [\theta_k, \theta_{k}+\sqrt{\delta} \Delta]$, both in $\Gs_k$ and $\Es_\delta^c$. This last set,
 introduced in Lemma \ref{L8BH}, corresponds to the fact that the logarithms of the absolute values
 of the characterize polynomials are not too large and not too close from each other: from this last
 condition, we can define the "carrier wave".
  \item For $k \in \Ns_2$, and for some $\theta^*_k \in [\theta_k, \theta_{k}+\sqrt{\delta} \Delta]
  \cap \Gs_k \cap \Es_\delta^c$, a subset $\Ys_k$ of $[0, 2 \pi M]$ such that the 2-oscillations
$\Delta_{\mu} R_{\theta^*_k}$ and $\Delta_{\mu} I_{\theta^*_k}$ are sufficiently small. This condition will ensure
that the carrier wave index corresponding to $\theta = \theta^*_k + \mu/N$ does not depend on
$\mu \in \Ys_k$.
\item From this property, we deduce that, for each pair of consecutive gaps between zeros of the carrier wave,
which are sufficiently large to contain an angle of the form $\theta^*_k + \mu/N$ for
$k \in \Ns_2$ and $\mu \in \Ys_k$ ("roomy gaps"), one can find, with the notation of the introduction, a sign change of $i^N e^{iN \theta /2} F_N(e^{-i \theta})$, and then a zero of $F_N$.
\end{enumerate}
All these sets will be precisely defined in the sequel of the paper, in a way such that their measure is "large"
with "high" probability. The corresponding estimates will then be used to prove our main result.
\begin{lemma}
Let $ \Pp_{SU(N)}^{(n)}$ be the $n$-fold product of the Haar measure on $SU(N)$, $ \Ee_{SU(N)}^{(n)}$ the corresponding expectation,  and $(X_j)_{1 \leq j \leq n}$ the canonical sequence of $n$ matrices in $SU(N)$. Then:
\begin{enumerate}

\item  There exists a random set $ \Ns_1 \subset \intcrochet{0, K- 1} $ such that $ \Ee_{SU(N)}^{(n)}( \abs{\Ns_1} )  \geq (1-\delta) K $ and
 $ \Pp_{SU(N)}^{(n)} $-a.s., $ \forall (j, k) \in \intcrochet{1,n} \times \Ns_1 $, \[
\int_{\theta_k}^{ \theta_{k+1} } \intmean{\mu}{2 \pi M}{  \Delta_{\mu} R_{\theta}^{(j)} } \frac{d\theta}{2 \pi} = O\prth{ \frac{1}{\delta K} }
\]
and
\[
 \int_{\theta_k}^{ \theta_{k+1} } \intmean{\mu}{2 \pi M}{  \Delta_{\mu} I_{\theta}^{(j)} } \frac{d\theta}{2 \pi} = O\prth{ \frac{1}{\delta K} }
\]

\item $ \Pp_{SU(N)}^{(n)} $-a.s., $ \forall k \in \Ns_1 $, $ \exists \, \Gs_k \subset [ \theta_k, \theta_{k+1} ) $ such that $  \lambda_{2\pi}\prth{\Gs_k}
  \geq (1-\delta)/K$ and, $ \forall \theta \in \Gs_k, j \in \intcrochet{1,n} $,

\vspace{-0.3cm}
\begin{eqnarray}\label{OGk}
\intmean{\mu}{2 \pi M}{  \Delta_{\mu} R_{\theta}^{(j)} }  = O\prth{ \frac{1}{\delta^2} } \mbox{ and } \intmean{\mu}{2 \pi M}{  \Delta_{\mu} I_{ \theta}^{(j)} }  = O\prth{ \frac{1}{\delta^2} }
\end{eqnarray}

\end{enumerate}
Here, the implied constant in the $O(\cdot)$ symbols depends only on $n$.
\end{lemma}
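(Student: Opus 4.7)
The plan is to reduce both assertions to Lemma~\ref{L7BH} by two successive applications of Markov's inequality, first on the coarse scale indexed by $k\in\intcrochet{0,K-1}$, and then on the fine scale of $\theta$ inside a single subinterval $[\theta_k,\theta_{k+1})$.

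The starting point is Lemma~\ref{L7BH}: for any $\mu\in[0,2\pi M]$ and $A\geq 0$, $\Pp_{SU(N)}\bigl(\int_0^{2\pi}\Delta_\mu R_\theta\,\tfrac{d\theta}{2\pi}\geq A\bigr)\leq c_2/A^2$, and similarly for $\Delta_\mu I_\theta$. Integrating this tail bound in $A$ yields the uniform-in-$\mu$ estimate $\Ee_{SU(N)}\bigl[\int_0^{2\pi}\Delta_\mu R_\theta\,\tfrac{d\theta}{2\pi}\bigr]=O(1)$, and the analogous bound for the imaginary part. Integrating once more against the normalized measure in $\mu\in[0,2\pi M]$ (Fubini applies since the integrands are non-negative) and summing over the $n$ independent matrices, the quantities
\[
B_k^{(j)}:=\int_{\theta_k}^{\theta_{k+1}}\intmean{\mu}{2\pi M}{\bigl(\Delta_\mu R_\theta^{(j)}+\Delta_\mu I_\theta^{(j)}\bigr)}\frac{d\theta}{2\pi}
\]
satisfy the master estimate $\Ee_{SU(N)}^{(n)}\bigl[\sum_{j=1}^n\sum_{k=0}^{K-1}B_k^{(j)}\bigr]=O_n(1)$.

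For part~(1), I would set, for a constant $C=C(n)$ to be fixed later,
\[
\Ns_1:=\bigl\{k\in\intcrochet{0,K-1}\,:\,B_k^{(j)}\leq C/(\delta K)\text{ for every }j\in\intcrochet{1,n}\bigr\}.
\]
A union bound followed by Markov's inequality gives $\Pp_{SU(N)}^{(n)}(k\notin\Ns_1)\leq\sum_{j=1}^n\Ee[B_k^{(j)}]\cdot\delta K/C$, and summing over $k$ together with the master estimate shows that $\Ee_{SU(N)}^{(n)}\bigl[\abs{\intcrochet{0,K-1}\setminus\Ns_1}\bigr]\leq\delta K$ as soon as $C$ is large enough depending on $n$. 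This is exactly $\Ee_{SU(N)}^{(n)}(\abs{\Ns_1})\geq(1-\delta)K$. The two separate integral bounds in part~(1) follow at once, since each of the two non-negative integrals is dominated by $B_k^{(j)}\leq C/(\delta K)=O_n(1/(\delta K))$.

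For part~(2), fix a realization in which $\Ns_1$ is defined and $k\in\Ns_1$, and set $h^{(j)}(\theta):=\intmean{\mu}{2\pi M}{\bigl(\Delta_\mu R_\theta^{(j)}+\Delta_\mu I_\theta^{(j)}\bigr)}$, so that $\int_{\theta_k}^{\theta_{k+1}}h^{(j)}(\theta)\,\tfrac{d\theta}{2\pi}\leq C/(\delta K)$ for every $j$. Defining
\[
\Gs_k:=\bigl\{\theta\in[\theta_k,\theta_{k+1})\,:\,h^{(j)}(\theta)\leq C'/\delta^2\text{ for every }j\bigr\}
\]
and applying Markov on $[\theta_k,\theta_{k+1})$ (whose $\lambda_{2\pi}$-measure is $1/K$) yields
\[
\lambda_{2\pi}\bigl([\theta_k,\theta_{k+1})\setminus\Gs_k\bigr)\leq\sum_{j=1}^n\frac{\delta^2}{C'}\int_{\theta_k}^{\theta_{k+1}}h^{(j)}(\theta)\frac{d\theta}{2\pi}\leq\frac{nC\delta}{C'K},
\]
which is at most $\delta/K$ as soon as $C'\geq nC$. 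Hence $\lambda_{2\pi}(\Gs_k)\geq(1-\delta)/K$, and the pointwise bound $h^{(j)}(\theta)\leq C'/\delta^2=O_n(1/\delta^2)$ on $\Gs_k$ yields each of the two estimates in \eqref{OGk} by the positivity of the summands. The whole argument is essentially a double-Markov bookkeeping: the only care required is to chain the two thresholds $C/(\delta K)$ and $C'/\delta^2$ so that each level costs only a $\delta$-fraction of the available measure, and to keep the constants uniformly bounded in $N$; beyond the single quantitative input from Lemma~\ref{L7BH} there is no deeper probabilistic obstacle, and the distinction between $U(N)$ and $SU(N)$ does not reappear here since Lemma~\ref{L7BH} is already stated under $\Pp_{SU(N)}$.
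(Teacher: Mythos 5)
Your proposal is correct and follows essentially the same route as the paper: a master $O_n(1)$ bound on the expected doubly-averaged $2$-oscillation, followed by two applications of Markov's inequality with thresholds $O(1/(\delta K))$ and $O(1/\delta^2)$ chained so that each level sacrifices only a $\delta$-fraction of measure. The only (immaterial) difference is that you obtain the $O(1)$ moment bound by integrating the tail estimate of Lemma \ref{L7BH} in $A$, whereas the paper invokes the second-moment estimate \eqref{RealBigO} from that lemma's proof together with Cauchy--Schwarz and the disintegration identity \eqref{Disintegration}.
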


% ==

\begin{proof}

 By \eqref{RealBigO} and the similar estimate for the imaginary part, we have uniformly (with a universal implied constant),
 $$ \Espr{U(N)}{\prth{ \Delta_{\mu} R_{0} }^2} + \Espr{U(N)}{\prth{ \Delta_{\mu} I_{0} }^2} = O(1).$$
 The
 Cauchy-Schwarz inequality ensures that $$\Espr{U(N)}{\Delta_{\mu} R_{0} } +\Espr{U(N)}{\Delta_{\mu} I_{0} }   = O(1),$$ i.e.\
$$ \int_J \Espr{SU(N)}{ \Delta_{\mu} R_{\theta} +  \Delta_{\mu} I_{\theta}  } \frac{d \theta}{2 \pi} = O(1) ,$$ which implies
$$ \int_J  \intmean{\mu}{2 \pi M}{ \Espr{SU(N)}{\Delta_{\mu} R_{\theta}+\Delta_{\mu} I_{\theta}  }  }  \frac{d \theta}{2 \pi}= O(1) .$$
Splitting the interval $J$ into $K$ equal pieces and applying this estimate to $n$ independent matrices $(X_j)_{1 \leq j \leq n}$ following the
Haar measure on $SU(N)$, one gets

\vspace{-0.3cm}
\begin{eqnarray}
\frac{1}{n}\sum_{j = 1}^n \sum_{k = 0}^{K - 1} \Ee_{SU(N)}^{(n)} \left( \int_{\theta_k}^{ \theta_{k+1} } \intmean{\mu}{2 \pi M}{  (\Delta_{\mu} R_{\theta}^{(j)}  + \Delta_{\mu} I_{\theta}^{(j)} )}
\frac{d\theta}{2 \pi}  \right)  = O(1).
\end{eqnarray}
Applying Markov inequality, we deduce that there exists a universal constant $\kappa > 0$, such that

$$
\Ee_{SU(N)}^{(n)} \left( \card\ensemble{ (j,k) \in \intcrochet{1, n} \times \intcrochet{0, K -1} \Big/  \int_{\theta_k}^{ \theta_{k+1} } \intmean{\mu}{2 \pi M}{  \Delta_{\mu} R_{\theta }^{(j)} } \frac{d\theta}{2 \pi} \geq
  \frac{ \kappa n }{K \delta } } \right)  \leq \frac{\delta K}{2}
$$
and
$$
\Ee_{SU(N)}^{(n)} \left( \card\ensemble{ (j,k) \in \intcrochet{1, n} \times \intcrochet{0, K -1} \Big/  \int_{\theta_k}^{ \theta_{k+1} } \intmean{\mu}{2 \pi M}{  \Delta_{\mu} I_{\theta }^{(j)} } \frac{d\theta}{2 \pi} \geq
  \frac{ \kappa n }{K \delta } } \right)  \leq \frac{\delta K}{2}.
$$

We thus set

\vspace{-0.3cm}
\begin{eqnarray*}
\Ns_1 := \bigcap_{j = 1}^n \Bigg\{ k \in \intcrochet{0, K- 1} \Big/ & & \int_{\theta_k}^{ \theta_{k+1} } \intmean{\mu}{2 \pi M}{  \Delta_{\mu} R_{\theta}^{(j)} } \frac{d\theta}{2 \pi} \leq  \frac{\kappa n}{K\delta}    , \\
 & & \int_{\theta_k}^{ \theta_{k+1} } \intmean{\mu}{2 \pi M}{  \Delta_{\mu} I_{\theta}^{(j)} } \frac{d\theta}{2 \pi} \leq  \frac{\kappa n}{ K \delta }  \Bigg\}
\end{eqnarray*}

and we get:

\vspace{-0.3cm}
\begin{eqnarray}\label{CardN1}
\Ee_{SU(N)}^{(n)}( \abs{\Ns_1} )   \geq (1-\delta) K.
\end{eqnarray}

Now, for $ k \in \Ns_1 $, let us set:

\vspace{-0.3cm}
\begin{eqnarray*}
\Gs_k := [ \theta_k, \theta_{k+1} ) \cap \bigcap_{j = 1}^n \ensemble{  \intmean{\mu}{2 \pi M}{  \Delta_{\mu} R_{ . }^{(j)} } \leq \frac{2 \kappa n^2}{\delta^2} , \intmean{\mu}{2 \pi M}{  \Delta_{\mu} I_{ . }^{(j)} } \leq
 \frac{2 \kappa n^2}{\delta^2} }
\end{eqnarray*}

Applying again Markov inequality, we get that $\Pp_{SU(N)}^{(n)} $-a.s.:

\vspace{-0.3cm}
\begin{eqnarray}\label{CardGk}
\lambda_{2\pi}\prth{\Gs_k}  \geq (1 - \delta) / K.
\end{eqnarray}

\end{proof}

% ==

We now define good indices.

\begin{definition}[Good indices] An index $ k \in \intcrochet{0, K- 1}$ is said to be \textbf{good} if :

\begin{enumerate}

\item $ k \in \Ns_1 $,

\item $ \Es_\delta^c \cap \Gs_k \cap [\theta_k, \theta_{k} + \sqrt{\delta} \Delta )\neq \emptyset $

\end{enumerate}

We denote by $ \Ns_2 $ the set of good indices :

\vspace{-0.3cm}
\begin{eqnarray}\label{GoodIndices}
\Ns_2 := \ensemble{ k \in \Ns_1 \, \Big/ \, \Es_\delta^c \cap \Gs_k \cap [\theta_k, \theta_{k} + \sqrt{\delta} \Delta) \neq \emptyset }
\end{eqnarray}

An index is said to be \textbf{bad} if it is not good.

\end{definition}

% ==

We then get the following result:
\begin{lemma}
With the notation above, the set of good indices satisfies:
$$\Ee_{SU(N)}^{(n)} ( \abs{\Ns_2}) = K \prth{ 1 - O\prth{ \sqrt{\delta} \log(1/\delta) } },$$
where the implied constant in the $O(\cdot)$ symbol depends only on $n$.
\end{lemma}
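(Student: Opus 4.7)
The plan is to split the set of bad indices into two natural categories and control each by an expectation inequality.

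First, write $\intcrochet{0, K-1} \setminus \Ns_2 = (\intcrochet{0, K-1} \setminus \Ns_1) \, \sqcup \, (\Ns_1 \setminus \Ns_2)$. For the first piece, the bound \eqref{CardN1} gives immediately $\Ee_{SU(N)}^{(n)}(|\intcrochet{0, K-1} \setminus \Ns_1|) \leq \delta K$.

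The main work is to control $\Ns_1 \setminus \Ns_2$. By the definition \eqref{GoodIndices}, an index $k \in \Ns_1 \setminus \Ns_2$ satisfies $\Gs_k \cap [\theta_k, \theta_k + \sqrt{\delta}\Delta) \subseteq \Es_\delta$. Now the interval $[\theta_k, \theta_k + \sqrt{\delta}\Delta)$ has normalized Lebesgue measure $\sqrt{\delta}/K$, while \eqref{CardGk} ensures $\lambda_{2\pi}([\theta_k, \theta_{k+1}) \setminus \Gs_k) \leq \delta/K$. Subtracting,
\[
\lambda_{2\pi}\bigl(\Gs_k \cap [\theta_k, \theta_k + \sqrt{\delta}\Delta)\bigr) \geq \frac{\sqrt{\delta}-\delta}{K}.
\]
Hence for every $k \in \Ns_1 \setminus \Ns_2$, the set $\Es_\delta$ contains a subset of $[\theta_k, \theta_k + \sqrt{\delta}\Delta)$ of measure at least $(\sqrt{\delta}-\delta)/K$.

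Since the intervals $[\theta_k, \theta_k + \sqrt{\delta}\Delta)$ are pairwise disjoint as $k$ ranges over $\intcrochet{0,K-1}$, summing these contributions yields
\[
\lambda_{2\pi}(\Es_\delta) \geq |\Ns_1 \setminus \Ns_2| \cdot \frac{\sqrt{\delta}-\delta}{K} \quad \Pp_{SU(N)}^{(n)}\text{-a.s.}
\]
Taking expectations and using Lemma \ref{L8BH} gives $\Ee_{SU(N)}^{(n)}(|\Ns_1 \setminus \Ns_2|) \leq \frac{K}{\sqrt{\delta}-\delta}\, c_3 \, \delta \log(1/\delta)$. Since $\delta < 1/4$, the factor $1/(1-\sqrt{\delta})$ is bounded by a constant, so this term is $O(\sqrt{\delta}\log(1/\delta)) K$. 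Adding the bound $\delta K$ for the first piece and noting $\delta = O(\sqrt{\delta}\log(1/\delta))$ yields
\[
\Ee_{SU(N)}^{(n)}(K - |\Ns_2|) = O\bigl(\sqrt{\delta}\log(1/\delta)\bigr) K,
\]
as required. The only subtle point is the pairwise disjointness step, which requires using the intervals of length $\sqrt{\delta}\Delta$ at the left end of each $[\theta_k,\theta_{k+1})$ rather than $\Gs_k$ itself; beyond that, the argument is a direct combination of the two ingredients \eqref{CardGk} and Lemma \ref{L8BH}.
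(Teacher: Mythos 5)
Your proof is correct and follows essentially the same route as the paper's: the same decomposition of the bad indices into $\Ns_1^c$ and $\{k \in \Ns_1 : \Gs_k \cap [\theta_k,\theta_k+\sqrt{\delta}\Delta) \subset \Es_\delta\}$, the same lower bound $(\sqrt{\delta}-\delta)/K$ on the measure of $\Gs_k \cap [\theta_k,\theta_k+\sqrt{\delta}\Delta)$ via inclusion-exclusion, and the same disjointness-plus-Lemma~\ref{L8BH} argument to bound the second piece. No issues.
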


\begin{proof}
If $ k \in \Ns_2^c $, either $ k \in \Ns_1^c $, or $ k \in \Ns_1 $ and $ \Es_\delta^c \cap \Gs_k \cap [ \theta_k, \theta_{k} + \sqrt{\delta} \Delta)  = \emptyset $ , i.e.\  $ \Ns_2^c = \Ns_1^c \cup \widetilde{\Ns}_1 $ where:

\vspace{-0.3cm}
\begin{eqnarray*}
\widetilde{\Ns}_1 := \ensemble{ k \in \Ns_1 \, \Big/ \, \Gs_k \cap [\theta_k, \theta_{k} + \sqrt{\delta} \Delta) \subset \Es_\delta }.
\end{eqnarray*}

By \eqref{CardN1}, we have $ \Ee_{SU(N)}^{(n)}( \abs{\Ns_1^c} )\leq \delta K $.

For all $ k \in \widetilde{\Ns}_1 $, we have $ \Es_\delta  \supset  \Gs_k \cap [ \theta_k, \theta_{k} + \sqrt{\delta} \Delta )  $, i.e.\ $ \Es_\delta \supset \bigcup_{k \in \widetilde{\Ns}_1} \Gs_k \cap [ \theta_k, \theta_{k} + \sqrt{\delta} \Delta) $, where the union is disjoint, and thus, $ \lambda_{2\pi}\prth{ \Es_\delta } \geq  \abs{\widetilde{\Ns}_1}  \min_k \lambda_{2\pi}\prth{\Gs_k \cap [\theta_k, \theta_{k} + \sqrt{\delta} \Delta) } $.

$ \Pp_{SU(N)}^{(n)} $-a.s., we have:

\vspace{-0.3cm}
\begin{eqnarray*}
\lambda_{2\pi}\prth{ \Gs_k \cap [ \theta_k, \theta_{k} + \sqrt{\delta} \Delta) } & \geq & \lambda_{2\pi}\prth{ \Gs_k } + \lambda_{2\pi}\prth{ [ \theta_k, \theta_{k} + \sqrt{\delta} \Delta ) } - \lambda_{2\pi} \prth{ [ \theta_k, \theta_{k} + \Delta) }\\
										& \geq & \frac{1}{K} \prth{(1 - \delta) +  \sqrt{\delta}   - 1 }   ~~\mbox{   by \eqref{CardGk}}
\end{eqnarray*}

Now, since $ \delta < 1/4 $, we obtain $ \Pp_{SU(N)}^{(n)} $-a.s.:

\vspace{-0.2cm}
\begin{eqnarray}\label{MeasGkTheta}
\lambda_{2\pi}\prth{\Gs_k \cap  [ \theta_k, \theta_{k} + \sqrt{\delta} \Delta) } \geq \frac{\sqrt{\delta}}{2K}
\end{eqnarray}

This implies that $ \Pp_{SU(N)}^{(n)} $-a.s.:

\vspace{-0.3cm}
\begin{eqnarray*}
\abs{\widetilde{\Ns}_1} & \leq & \frac{2 K}{\sqrt{\delta}} \lambda_{2\pi} \prth{\Es_\delta}
\end{eqnarray*}

Now, by Lemma \eqref{L8BH}, $ \Ee_{SU(N)}^{(n)} \left( \abs{\widetilde{\Ns}_1} \right) = O\prth{K \sqrt{\delta} \log(1/\delta)} $ and then:

\vspace{-0.3cm}
\begin{eqnarray*}
\Ee_{SU(N)}^{(n)} ( \abs{\Ns_2^c}) \leq \Ee_{SU(N)}^{(n)} (\abs{\Ns_1^c} )+ \Ee_{SU(N)}^{(n)} \left( \abs{\widetilde{\Ns}_1} \right) \leq \delta K + O\prth{K \sqrt{\delta} \log(1/\delta)}.
\end{eqnarray*}

\end{proof}

% ==
\subsection{Speed of the good oscillation of the log-characteristic polynomials}

\begin{lemma} \label{speedgoodoscillation}

With the notation above, and $\Pp_{SU(N)}^{(n)} $-a.s., $ \forall k \in \Ns_2 $, there exists a random set $ \Ys_k \subset \crochet{0,2 \pi M} $,
and  $ \theta^*_k \in \Es_\delta^c \cap \Gs_k \cap [\theta_k, \theta_{k} + \sqrt{\delta} \Delta ) $, such that

\vspace{-0.3cm}
\begin{eqnarray}\label{SpeedGoodOsc}
 \lambda_{2 \pi M} \prth{ \Ys_k }  =  1 - O\prth{ \delta^{-2} ( \log N )^{-1/4} ( \log M )^{1/2} },
\end{eqnarray}
where $\lambda_M$ is $1/2 \pi M$ times the Lebesgue measure, and for all $ j \in \intcrochet{1, n} $, $\mu \in \Ys_k$,

\vspace{-0.3cm}
\begin{equation*}\tag{\ref{SpeedGoodOsc}}
\Delta_{\mu} R_{\theta^*_k}^{(j)} = O\prth{ \frac{( \log N )^{1/4}}{( \log M )^{1/2} } } \mbox{  and  } \Delta_{\mu} I_{\theta^*_k}^{(j)} = O\prth{ \frac{( \log N )^{1/4}}{( \log M )^{1/2} } }
\end{equation*}
Again, the implied constant in the $O(\cdot)$ symbol depends only on $n$.
\end{lemma}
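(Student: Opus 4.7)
The plan is, given $k\in\Ns_2$, to pick once and for all a point $\theta^*_k$ in the (nonempty by the very definition of $\Ns_2$) intersection $\Es_\delta^c\cap\Gs_k\cap[\theta_k,\theta_k+\sqrt{\delta}\Delta)$, and then to carve $\Ys_k$ out of $[0,2\pi M]$ by discarding those $\mu$ at which any of the $2n$ relevant oscillations is too large. The key input is the membership $\theta^*_k\in\Gs_k$, which by the defining property \eqref{OGk} of $\Gs_k$ yields, for every $j\in\intcrochet{1,n}$,
$$\intmean{\mu}{2\pi M}{\Delta_{\mu}R^{(j)}_{\theta^*_k}}=O(\delta^{-2}),\qquad \intmean{\mu}{2\pi M}{\Delta_{\mu}I^{(j)}_{\theta^*_k}}=O(\delta^{-2}),$$
with implied constants depending only on $n$.

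From here the rest is a one-shot application of Markov's inequality in the variable $\mu$. Set the threshold $T:=(\log N)^{1/4}/(\log M)^{1/2}$. For each of the $2n$ nonnegative integrands above, Markov gives
$$\lambda_{2\pi M}\!\left(\left\{\mu\in[0,2\pi M]:\Delta_{\mu}R^{(j)}_{\theta^*_k}>T\right\}\right)\leq \frac{C_n}{\delta^{2}T}=O\!\left(\delta^{-2}(\log N)^{-1/4}(\log M)^{1/2}\right),$$
and analogously for $\Delta_{\mu}I^{(j)}_{\theta^*_k}$. Define $\Ys_k$ to be the complement in $[0,2\pi M]$ of the union of these $2n$ exceptional sets. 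Then by construction the pointwise bounds \eqref{SpeedGoodOsc} hold uniformly in $j$ for every $\mu\in\Ys_k$, and the union bound (which costs only a factor $2n$, hence is absorbed into the implied constant depending on $n$) delivers the measure estimate $\lambda_{2\pi M}(\Ys_k)=1-O(\delta^{-2}(\log N)^{-1/4}(\log M)^{1/2})$.

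There is no serious obstacle once the previous lemma is in hand; the only point requiring care is the order of quantifiers. One must first fix the random angle $\theta^*_k$ as a measurable selection from the random set $\Es_\delta^c\cap\Gs_k\cap[\theta_k,\theta_k+\sqrt{\delta}\Delta)$ (for instance its essential infimum, which is measurable because $\Gs_k$, $\Es_\delta$ and $[\theta_k,\theta_k+\sqrt{\delta}\Delta)$ are all measurable in $(X_1,\ldots,X_n)$), and only then perform the Markov estimate in $\mu$, so that $\Ys_k$ is a well-defined random subset of $[0,2\pi M]$ and the constants in the bounds are independent of $j$. All the real analytic work has already been done upstream, in Lemma \ref{L7BH} and in the construction of $\Ns_1,\Gs_k,\Ns_2$; the present lemma is the quantitative payoff of placing $\theta^*_k$ inside $\Gs_k$.
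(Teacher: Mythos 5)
Your proposal is correct and follows essentially the same route as the paper: fix $\theta^*_k$ in the nonempty set $\Es_\delta^c\cap\Gs_k\cap[\theta_k,\theta_k+\sqrt{\delta}\Delta)$, invoke the defining bound \eqref{OGk} of $\Gs_k$ at $\theta^*_k$, and apply Markov's inequality in $\mu$ with threshold $(\log N)^{1/4}(\log M)^{-1/2}$ together with a union bound over the $2n$ exceptional sets. The remark about measurable selection of $\theta^*_k$ is a reasonable extra precaution that the paper leaves implicit.
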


% ==

\begin{proof}

Let $ k \in \Ns_2 $ and $ \theta^*_k \in  \Es_\delta^c \cap \Gs_k \cap [\theta_k, \theta_{k} + \sqrt{\delta} \Delta) $. We set:

\vspace{-0.3cm}
\begin{eqnarray*}
\Ys_k := \bigcap_{j = 1}^n  \ensemble{ \Delta_{.} R_{\theta^*_k}^{(j)} \leq \varepsilon , \ \Delta_{.} I_{\theta^*_k}^{(j)} \leq \varepsilon}
\end{eqnarray*}
where
$$\varepsilon := \frac{( \log N )^{1/4}}{( \log M )^{1/2} }.$$
Applying Markov inequality, we get:

\vspace{-0.3cm}
\begin{eqnarray*}
\lambda_{2 \pi M} \prth{ \Ys_k^c }  & \leq & \lambda_{2 \pi M} \prth{ \bigcup_{j = 1}^n  \ensemble{ \Delta_{.} R_{\theta^*_k}^{(j)} \geq \varepsilon } } +  \lambda_{2 \pi M} \prth{ \bigcup_{j = 1}^n
  \ensemble{ \Delta_{.} I_{\theta^*_k}^{(j)} \geq \varepsilon } } \\
					& \leq &   \frac{2n}{\varepsilon } \max_{1 \leq j \leq n} \, \left( \int_0^{2 \pi M} \Delta_{\mu} R^{(j)}_{\theta^*_k}  \frac{d\mu}{2 \pi M} \vee
 \int_0^{2 \pi M} \Delta_{\mu} I^{(j)}_{\theta^*_k}  \frac{d\mu}{2 \pi M} \right)
 = \frac{1}{\varepsilon} O\prth{ \delta^{-2} },
\end{eqnarray*}
by \eqref{OGk} which gives the announced result.

\end{proof}

% ==
\subsection{The number of sign changes} $ $

Let us go back to Theorem \ref{main}. We need to estimate the number of zeros of $F_N$ on the unit circle, or equivalently, the
number of values of $\theta \in J$ such that
the following quantity vanishes:
\begin{equation}
i^N e^{iN \theta /2} F_N(e^{-i \theta}) = i^N e^{iN \theta /2} \sum_{j=1}^n b_j \Phi_{U_{N,j}} (e^{-i \theta}) = \sum_{j=1}^n b_j i^N e^{iN \theta/2} Z_{U_{N,j}} (\theta). \label{realpolycar}
\end{equation}
Using the fact that $U_{N,j} \in SU(N)$, one checks that $i^N e^{iN \theta/2} Z_{U_{N,j}} (\theta)$ is real, and then the number of zeros of $F_N$ on the unit circle is
bounded from below by the number of sign changes, when $\theta$ increases from $\theta_0$ to $\theta_0 + 2 \pi$, of the real quantity given by the right-hand side of \eqref{realpolycar}.
Now, the order of magnitude of $\log |Z_{U_{N,j}} (\theta)|$ is $\sqrt{\log N}$ and more precisely, Lemma \ref{L8BH} informally means that for most values of $\theta$, 
the values of $\log |Z_{U_{N,j}} (\theta)|$ for $1 \leq j \leq n$ are pairwise separated by an interval of length of order $\sqrt{\log N}$.  Hence, one of the terms
in the sum at the right-hand side of \eqref{realpolycar} should dominate all the others. If $j$ is the corresponding index, one can expect that the sign changes
of \eqref{realpolycar} can, at least locally, be related to the corresponding sign changes of $i^N e^{iN \theta/2} Z_{U_{N,j}} (\theta)$, which are associated to
the zeros of the characteristic polynomial $Z_{U_{N,j}}$. This should give a lower bound on the number of sign changes of \eqref{realpolycar}.

This informal discussion motivates the following definition.

\begin{definition}
With the notation of the previous subsections, for all $k \in \Ns_2$,
we define the \textbf{carrier wave index} by:

\vspace{-0.3cm}
\begin{eqnarray*}
j_k := \Arg \max_{j} \ensemble{\Re \log Z_{X_j}(\theta^*_k)},
\end{eqnarray*}
where $\theta^*_k$ is the random angle introduced in Lemma \ref{speedgoodoscillation}. Moreover, we consider the following interval:

\vspace{-0.3cm}
\begin{eqnarray*}
J_k := \crochet{ \theta^*_k, \theta^*_{k} + (1 - \sqrt{\delta}) \Delta }
\end{eqnarray*}
\end{definition}

$ ~~ $
$ ~~ $
$ ~~ $

As $ \theta^*_k \in \Es_\delta^c $, we have $ \forall j \neq j_k $, $ \Re \log Z_{X_{j}}(\theta^*_k) \leq \Re \log Z_{X_{j_k}}(\theta^*_k) - \frac{\delta}{\sqrt{2}} \sqrt{\log N} $. From \eqref{SpeedGoodOsc}, we deduce that $ \forall j \neq j_k  $, $ \forall \mu \in \Ys_k $ :

\vspace{-0.3cm}
\begin{eqnarray}\label{SlowOscillationGoodSize}
\Re \log Z_{X_{j}}\prth{ \theta^*_k + \frac{\mu}{N} } \leq \Re \log Z_{X_{j_k}}\prth{ \theta^*_k + \frac{\mu}{N} } - \frac{\delta}{\sqrt{2}} (\log N)^{1/2} + O\prth{ (\log N)^{1/4} }
\end{eqnarray}
\noindent
Now, since
$$1 / \delta = O((\log N)^{1/10}),$$
with a universal implied constant,
we then get, for a universal $c > 0$,
$$ \frac{\left|Z_{X_{j}}\prth{ \theta^*_k + \frac{\mu}{N} } \right|}{ \left|Z_{X_{j_k}}\prth{ \theta^*_k + \frac{\mu}{N} }  \right|}
\leq \exp \left( - 2 c (\log N)^{0.4} + O\prth{ (\log N)^{1/4} } \right)  \leq \exp \left( - c (\log N)^{0.4}  \right),$$
for $N$ large enough, depending only on $n$. This implies:
\begin{align*}
 \left| \sum_{j \neq j_k} b_j Z_{X_{j}}\prth{ \theta^*_k + \frac{\mu}{N} } \right| & \leq  \frac{\sum_j |b_j|}{\min_j |b_j|} \,
\left|  b_{j_k} Z_{X_{j_k}}\prth{ \theta^*_k + \frac{\mu}{N} } \right| \, \exp \left( - c (\log N)^{0.4}  \right) \\ & \leq \frac{1}{2} \left|  b_{j_k} Z_{X_{j_k}}\prth{ \theta^*_k + \frac{\mu}{N} } \right|
\end{align*}
for $N \geq N_0$, where $N_0$ depends only on $n, b_1, \dots, b_n$.
Hence, for $k \in \Ns_2$, $\mu \in \Ys_k$ and $\theta = \theta^*_k + \mu / N$, the quantity
$$G(\theta) := \sum_{j=1}^n b_j i^N e^{iN \theta/2} Z_{X_j} (\theta),$$
which is $\Pp_{SU(N)}^{(n)}$-a.s. real, has the same sign as its term of index $j_k$.

Theorem \ref{main} is proven if we show that the expectation of number of sign changes of $G(\theta)$ for $\theta \in J$, under $\Pp_{SU(N)}^{(n)}$, is bounded from below
by $N - o(N)$. Hence, it is sufficient to get:
$$\Ee_{SU(N)}^{(n)} \left( \sum_{k \in \Ns_2} \mathcal{S}_k \right) \geq N - o(N),$$
where $\mathcal{S}_k$ is the number of sign changes of $b_{j_k} i^N e^{iN \theta/2} Z_{X_{j_k}} (\theta)$, for
$\theta \in J_k \cap \{ \theta^*_k + \frac{\mu}{N}, \mu \in \Ys_k \}$.

Now, for $k \in \Ns_2$, let $\alpha_{k,1} \leq \alpha_{k,2} \leq \cdots \leq \alpha_{k,\nu_k}$ be the eigenangles, counted with multiplicity, of $X_{j_k}$ in the interval $J_k$.
 The sign of $b_j i^N e^{iN \theta/2} Z_{X_{j_k}}$ alternates between the different intervals $(\alpha_{k,1}, \alpha_{k,2}), (\alpha_{k,2}, \alpha_{k,3}), \dots, (\alpha_{k,\nu_k-1}, \alpha_{k, \nu_k})$. Hence,
for each pair of consecutive intervals containing an angle $\theta = \theta^*_k + \frac{\mu}{N}, \mu \in \Ys_k$, we get a contribution of at least $1$ for the quantity
$\mathcal{S}_k$.

Every element of $J_k$ can be written as $\theta^*_k + \frac{\mu}{N}$, for
$$0 \leq \mu \leq (1 - \sqrt{\delta}) N \Delta \leq N \Delta = 2 \pi M.$$
The Lebesgue measure of the elements of $J_k$ for which $\mu \notin \Ys_k$ is then bounded by
$$\frac{1}{N} \lambda (\Ys_k^{c}) = \frac{2 \pi M}{N} \lambda_{2 \pi M} (\Ys_k^{c}),$$
where $\lambda$ denotes the standard Lebesgue measure. Hence, if an interval $(\alpha_{k,\nu}, \alpha_{k,\nu+1})$ has a length strictly greater than this bound, 
it necessarily contains some $\theta = \theta^*_k + \frac{\mu}{N}$ for which $\mu \in \Ys_k$.
For some $c' > 0$ depending only on $n$, this condition is implied by
$$\alpha_{k,\nu+1} - \alpha_{k, \nu} > c' \frac{M}{N} \delta^{-2} (\log N)^{-1/4} (\log M)^{1/2}.$$
We will say that $ (\alpha_{k, \nu}, \alpha_{k, \nu+1})$ is a \textbf{roomy gap} if this inequality is satisfied, and a \textbf{narrow gap} if
$$\alpha_{k,\nu+1} - \alpha_{k, \nu} \leq c' \frac{M}{N} \delta^{-2} (\log N)^{-1/4} (\log M)^{1/2}.$$
By the previous discussion, $\mathcal{S}_k$ is at least the number of pairs of consecutive roomy gaps among the intervals
$(\alpha_{k,1}, \alpha_{k,2}), (\alpha_{k,2}, \alpha_{k,3}), \dots, (\alpha_{k,\nu_k-1}, \alpha_{k, \nu_k})$. If there is no narrow gap,
the number of such pairs is $(\nu_k -2)_+ \geq \nu_k - 2$. Moreover, if among the intervals, we replace a roomy gap by a narrow gap, this removes at most two
pairs of consecutive roomy gaps. Hence, we deduce, for all $k \in \Ns_2$, that
$$\mathcal{S}_k \geq \nu_k - 2 - 2 \psi_k,$$
where $\nu_k$ is the number of zeros of $Z_{X_{j_k}}$ in the interval $J_k$ and $\psi_k$ the number of narrow gaps among these zeros.
Hence, we get the lower bound:
$$\Ee_{SU(N)}^{(n)} \left( \sum_{k \in \Ns_2} \mathcal{S}_k \right) \geq \Ee_{SU(N)}^{(n)} \left( \sum_{k \in \Ns_2} \nu_k - 2 K - 2 \psi \right),$$
where $\psi$ is the total number of narrow gaps among the zeros in $[0, 2\pi)$ of all the functions $(Z_j)_{1 \leq j \leq n}$.

Now, $\Pp_{SU(N)}^{(n)}$-a.s., for all $k \in \Ns_2$, we have:
\begin{align*}
\nu_k & = \left| \{ \theta \in \crochet{ \theta^*_k, \theta^*_{k} + (1 - \sqrt{\delta}) \Delta }, Z_{j_k} (\theta) = 0\} \right|
\\ & \geq \left| \{ \theta \in \crochet{ \theta_k + \sqrt{\delta} \Delta, \theta_k + (1 - \sqrt{\delta}) \Delta }, Z_{j_k} (\theta) = 0\} \right|
\\ & = \frac{N (1 - 2 \sqrt{\delta}) \Delta}{2\pi}   + \frac{1}{\pi}
 \prth{ \Im\log Z_{X_{j_k}}(\theta_k + (1 - \sqrt{\delta}) \Delta )  - \Im\log Z_{X_{j_k}}( \theta_k + \sqrt{\delta} \Delta) }
\\ & \geq \frac{N}{K}  (1 - 2 \sqrt{\delta}) - \frac{1}{\pi} \sum_{j=1}^n
 \left| \prth{ \Im\log Z_{X_j}(\theta_k + (1 - \sqrt{\delta}) \Delta )  - \Im\log Z_{X_j}( \theta_k + \sqrt{\delta} \Delta) } \right|.
\end{align*}
the second equality coming from Proposition \ref{propositionImLogCardZeros}.

Adding this inequality for all $k \in \Ns_2$, taking the expectation and using \eqref{controlImtheta0} yields the estimates:
$$
 \Ee_{SU(N)}^{(n)} \left( \sum_{k \in \Ns_2} \nu_k  \right) \geq \frac{N}{K}  (1 - 2 \sqrt{\delta}) \Ee_{SU(N)}^{(n)} ( |\Ns_2| )
$$ $$-  \sum_{j=1}^n \sum_{k = 0}^{K-1} \Ee_{SU(N)}^{(n)} \left[ \left| \prth{ \Im\log Z_{X_j}(\theta_k +
 (1 - \sqrt{\delta}) \Delta )  - \Im\log Z_{X_j}( \theta_k + \sqrt{\delta} \Delta) } \right| \right] $$
\begin{align}
  & \geq \frac{N}{K}  (1 - 2 \sqrt{\delta}) K (1 - O(\sqrt{\delta} \log (1/\delta)) + O(K \sqrt{\log M}) \nonumber
\\ & \geq N (1 - O(\sqrt{\delta} \log (1/\delta)) + O \prth{ \frac{ N \sqrt{\log M}}{M} }. \label{estimateterm1}
\end{align}
Moreover,
\begin{equation}
2 K = O (N/M).
\label{estimateterm2}
\end{equation}
It remains to estimate
$$\Ee_{SU(N)}^{(n)}[ 2 \psi] = 2n \Ee_{SU(N)} [ \chi] = 2n \Ee_{U(N)} [\chi],$$
where $\chi$ denotes the number of narrow gaps between the eigenvalues of the canonical unitary matrix $X$. The replacement of
$SU(N)$ by $U(N)$ is possible since the notion of narrow gap is invariant by rotation of the eigenvalues.

Now, the last expectation can be estimated by the following result:
\begin{lemma}
For $N \geq 1$ and $\epsilon > 0$, let $U$ be a uniform matrix on $U(N)$ and let $\chi_{\varepsilon}$ be the number of pairs of eigenvalues of $U$
whose argument differ by at most $\varepsilon/N$. Then, $\Ee[ \chi_{\varepsilon} ] = O(N\varepsilon^3)$.
\end{lemma}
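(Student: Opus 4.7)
The plan is to exploit the determinantal structure of the Haar measure on $U(N)$. Under $\Pp_{U(N)}$, the eigenangles of the canonical matrix $X$ form a determinantal point process on $[0,2\pi)$ with correlation kernel $K_N(\theta,\phi) = \frac{1}{2\pi}\sum_{j=0}^{N-1} e^{ij(\theta-\phi)}$, the Dirichlet kernel, which satisfies $|K_N(\theta,\phi)|^2 = \frac{\sin^2(N(\theta-\phi)/2)}{4\pi^2\sin^2((\theta-\phi)/2)}$ and $K_N(\theta,\theta) = N/(2\pi)$. The two-point correlation function is therefore
\[
R_2(\theta,\phi) = K_N(\theta,\theta)K_N(\phi,\phi) - |K_N(\theta,\phi)|^2 = \frac{N^2}{4\pi^2} - \frac{\sin^2(N(\theta-\phi)/2)}{4\pi^2\sin^2((\theta-\phi)/2)},
\]
and the quantity of interest is rewritten as
\[
2\,\Ee_{U(N)}[\chi_{\varepsilon}] = \iint_{[0,2\pi)^2} R_2(\theta,\phi)\,\Unens{|\theta-\phi|\leq\varepsilon/N}\,d\theta\,d\phi,
\]
where the constraint on $\theta-\phi$ is interpreted modulo $2\pi$.

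Next I would use rotation invariance---since $R_2(\theta,\phi)$ depends only on $\theta-\phi$ modulo $2\pi$---to collapse the double integral into the one-dimensional integral $2\pi\int_{-\varepsilon/N}^{\varepsilon/N} R_2(0,u)\,du$, valid as soon as $\varepsilon/N \leq \pi$; the range $\varepsilon \geq \pi N$ is trivial because $\chi_{\varepsilon}\leq N^2 = O(N\varepsilon^3)$ there. The lemma thus reduces to the pointwise estimate $R_2(0,u) = O(N^4 u^2)$ uniformly for $|u|\leq\pi$, whose integral over $|u|\leq\varepsilon/N$ is $O\bigl(N^4\cdot(\varepsilon/N)^3\bigr) = O(N\varepsilon^3)$, as desired.

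The main obstacle is this uniform estimate $R_2(0,u) = O(N^4 u^2)$. I would factor
\[
N^2\sin^2(u/2)-\sin^2(Nu/2) = \bigl(N\sin(u/2)-\sin(Nu/2)\bigr)\bigl(N\sin(u/2)+\sin(Nu/2)\bigr),
\]
bound the second factor trivially by $O(Nu)$, and show that the first factor is $O(N^3 u^3)$: for $|Nu|\leq 1$ a third-order Taylor expansion of both sines produces cancellation of the linear terms and leaves a leading coefficient $(N^3-N)u^3/48$, while for $|Nu|\geq 1$ both summands are bounded by constants, so their difference is bounded, and this constant is absorbed into $O(N^3 u^3)$ because $|Nu|\geq 1$ forces $N^3|u|^3\geq 1$. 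Dividing by $\sin^2(u/2)\geq c u^2$ on $[-\pi,\pi]$ then gives $R_2(0,u) = O(N^4 u^2)$. The Taylor-expansion cancellation in that first factor is the only delicate point; the rest is determinantal bookkeeping plus a one-variable calculus estimate.
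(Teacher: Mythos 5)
Your overall strategy is the same as the paper's: express $\Ee[\chi_{\varepsilon}]$ via the translation-invariant two-point correlation $R_2(\theta,\phi)$ of the determinantal process, reduce to the pointwise bound $R_2(0,u)=O(N^4u^2)$ for $|u|\leq\pi$, and integrate over the short window. The paper gets the pointwise bound in a single uniform step: writing $R_2(0,u)=\frac{N^2}{4\pi^2}\bigl[1-\bigl(\tfrac{\sin(Nu/2)}{N\sin(u/2)}\bigr)^2\bigr]$, using $N|\sin(u/2)|\leq N|u|/2$ to reduce the bracket to $1-(\sin x/x)^2$ with $x=Nu/2$, and then invoking $(\sin x/x)^2\geq(1-x^2/6)^2\geq 1-x^2/3$ (the last trivially when $x^2\geq 3$), which bounds the bracket by $x^2/3$ with no case split. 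Your route through the factorization $N^2\sin^2(u/2)-\sin^2(Nu/2)=\bigl(N\sin(u/2)-\sin(Nu/2)\bigr)\bigl(N\sin(u/2)+\sin(Nu/2)\bigr)$ can be made to work, but your justification in the regime $|Nu|\geq 1$ is wrong as stated: it is simply not true that $N\sin(u/2)$ is ``bounded by a constant'' on $|u|\leq\pi$ --- it can be of size $N$. The conclusion you want still holds, but for a different reason: $|N\sin(u/2)|\leq N|u|/2\leq \tfrac12 N^3|u|^3$ and $|\sin(Nu/2)|\leq 1\leq N^3|u|^3$ once $|Nu|\geq 1$, so the first factor is indeed $O(N^3u^3)$; or, even more simply, in that regime drop the factoring entirely and use the trivial $R_2(0,u)\leq N^2/(4\pi^2)\leq N^4u^2/(4\pi^2)$. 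Also note that the Taylor argument for $|Nu|\leq 1$ only identifies the leading coefficient; to get a \emph{uniform} bound one still needs to check the remainder is $O(N^3u^3)$, which is routine (each of $N(u/2)^5$ and $(Nu/2)^5$ is $\leq N^3|u|^3$ when $|Nu|\leq 1$) but should be said. With these repairs your computation closes, and the rest of the proposal is correct.
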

\begin{proof}
 For $\theta_1, \theta_2 \in \Rr$, the two-point correlation density of the eigenvalues of $U$ at $e^{i \theta_1}$ and $e^{i \theta_2}$,
with respect to the uniform probability measure on the unitary group, is given
by
$$\rho(e^{i \theta_1}, e^{i \theta_2}) = N^2 \left[ 1 - \left( \frac{\sin[N (\theta_2 - \theta_1)/2]}{N \sin [(\theta_2 - \theta_1)/2]} \right)^2 \right].$$
Now,
$$N | \sin [(\theta_2 - \theta_1)/2] | \leq N |\theta_2 - \theta_1|/2$$
and then
$$\left( \frac{\sin[N (\theta_2 - \theta_1)/2]}{N \sin [(\theta_2 - \theta_1)/2]} \right)^2 \geq \left(\frac{\sin x}{x} \right)^2$$
for $x=N (\theta_2 - \theta_1)/2$. Now, for all $x \in \Rr$, $|\sin x| \geq \sin |x| \geq  |x| - |x|^3/6$, which implies
$$\left(\frac{\sin x}{x} \right)^2 \geq \left(1 - \frac{x^2}{6} \right)^2 \geq 1 - \frac{x^2}{3}$$
and
$$\rho(e^{i \theta_1}, e^{i \theta_2}) \leq N^2 \left[ 1 - \left(\frac{\sin x}{x} \right)^2 \right]
\leq \frac{N^2 x^2}{3} = \frac{ N^4 (\theta_2 - \theta_1)^2}{6}.$$
Integrating the correlation function for $\theta_1 \in [0, 2\pi)$ and $\theta' :=  \theta_2 - \theta_1 \in [-\varepsilon/N, \varepsilon/N]$
gives:
$$ \Ee[ \chi_{\varepsilon} ] \leq \int_0^{2\pi} \frac{d\theta}{2 \pi} \int_{-\varepsilon/N}^{\varepsilon/N} \frac{d \theta'}{2 \pi}
\frac{ N^4 (\theta')^2}{6} \leq N^4 \int_{-\varepsilon/N}^{\varepsilon/N} (\theta')^2 d \theta' = O \prth{N^4 (\varepsilon/N)^3}.
$$
\end{proof}
\noindent
From this result, applied for
$$ \varepsilon = c' M \delta^{-2} (\log N)^{-1/4} (\log M)^{1/2}$$
we get the estimate:
\begin{equation}
\Ee_{SU(N)}^{(n)}[ 2 \psi] =   O(N\varepsilon^3) = O \prth{ N M^3 \delta^{-6} (\log N)^{-3/4} (\log M)^{3/2}}. \label{estimateterm3}
\end{equation}
The estimates \eqref{estimateterm1},  \eqref{estimateterm2} and \eqref{estimateterm3} imply:
\begin{multline*}
\Ee_{SU(N)}^{(n)} \left( \sum_{k \in \Ns_2} \mathcal{S}_k \right) \\
\geq N \left[ 1 - O\left(\sqrt{\delta} \log(1/\delta)
+ \frac{\sqrt{\log M}}{M}
+M^3 \delta^{-6} (\log N)^{-3/4} (\log M)^{3/2}  \right)\right].
\end{multline*}
From the values taken for $\delta$ and $M$, we get:
$$\sqrt{\delta} \log(1/\delta) = O\prth{(\log N)^{-3/64} \log \log N}, $$
$$ \frac{\sqrt{\log M}}{M} = O \prth{\sqrt{\log \log N} (\log N)^{-3/64}}$$
and
\begin{align*}
M^3 \delta^{-6} (\log N)^{-3/4} (\log M)^{3/2} & = O \prth{ (\log N)^{9/64} (\log N)^{18/32} (\log N)^{-3/4} (\log \log N)^{3/2}} \\ & =
O \prth{  (\log N)^{-3/64} (\log \log N)^{3/2}}.
\end{align*}
Finally, we get
$$\Ee_{SU(N)}^{(n)} \left( \sum_{k \in \Ns_2} \mathcal{S}_k \right) = N \left( 1- O\prth{ (\log N)^{-1/22} } \right),$$
which completes the proof of Theorem \ref{main}.

\subsection*{Acknowledgment}

We thank Brian Conrey and David Farmer for encouraging us to investigate this problem.

% ==
%\pagebreak

% \input{Add/Biblio.tex}
\bibliographystyle{amsplain}

\end{document}